\newtheorem{theorem}{Theorem}[section]
\newtheorem{proposition}[theorem]{Proposition}
\newtheorem{lemma}[theorem]{Lemma}
\newtheorem{definition}[theorem]{Definition}
\newtheorem{example}[theorem]{Example}
\newtheorem*{example*}{Example}
\def\a{\alpha}
\def\d{\delta}
\def\e{\epsilon}
\def\ca{\mathcal{A}}
\def\cb{\mathcal{B}}
\def\cc{\mathcal{C}}
\def\cf{\mathcal{F}}
\def\F{\mathbb{F}}
\def\E{\mathbb{E}}
\def\P{\mathbb{P}}
\def\J{\mathbb{J}}
\def\HH{\mathbb{H}}
\DeclareMathOperator{\cor}{cor}
\DeclareMathOperator{\Diam}{Diam}
\DeclareMathOperator{\TV}{TV}
\DeclareMathOperator{\cod}{cod}
\title{On the expressive power of mod-$p$ linear forms on the Boolean cube}
\author{Thomas Karam\footnote{Mathematical Institute, University of Oxford. This research was essentially carried out when that author was at the Department of Pure Mathematics and Mathematical Sciences, University of Cambridge. Email: \texttt{thomas.karam@maths.ox.ac.uk}.}}
\begin{document}
\maketitle

\begin{abstract}

Let $(\ca_i)_{i \in [s]}$ be a sequence of dense subsets of the Boolean cube $\{0,1\}^n$ and let $p$ be a prime. We show that if $s$ is assumed to be superpolynomial in $n$ then we can find distinct $i,j$ such that the two distributions of every mod-$p$ linear form on $\ca_i$ and $\ca_j$ are almost positively correlated. We also prove that if $s$ is merely assumed to be sufficiently large independently of $n$ then we may require the two distributions to have overlap bounded below by a positive quantity depending on $p$ only. 
\end{abstract}

\tableofcontents

\section{Introduction}

For $p$ a prime and $n$ a positive integer, we will throughout say that a \emph{mod-$p$ linear form}, or \emph{mod-$p$ form} for short, is a linear form $\phi: \F_p^n \to \F_p$. Such a form can be written as \[\phi(x) = a_1 x_1 + \dots + a_n x_n\] for some $a_1, \dots, a_n \in \F_p$, and we will say that the \emph{support} of $\phi$ is the set \[\{z \in [n]: x_z \neq 0\}.\] If $\phi, \psi$ are two mod-$p$ forms then we will say that the \emph{support distance} between them is the size of the support of the difference $\phi - \psi$.

\subsection{Restrictions of mod-$p$ forms to the cube}

One central tool in the analysis of Boolean functions and in theoretical computer science more broadly is discrete Fourier analysis. There, the characters involved in the definition of the Fourier coefficients are functions $\F_2^n \to \F_2$ of the type $x \to (-1)^{\phi(x)}$, for some linear form $\phi: \{0,1\}^n \to \F_2$, where $\{0,1\}$ is identified with $\F_2$. Restrictions to $\{0,1\}^n$ of mod-$p$ forms for some general prime $p$ appear to be variants of these objects that are natural to study. Because $\{0,1\}$ can be identified to the whole of $\F_2$, whereas $\{0,1\}$ can only be embedded into $\F_p$, we can expect behaviour arising from linear forms with $p \ge 3$ on the cube to be at least a little more complex, and in some ways this is indeed the case. 

A basic observation which illustrates some of the difficulties that arise is the following. A given family $(\phi_1, \dots, \phi_k)$ of linear forms $\F_2^n \to \F_2$ is linearly independent if and only if it is independent in the probabilistic sense that when $x$ is an element of $\{0,1\}^n$ chosen uniformly at random, the $k$ events \[\phi_1(x) = y_1, \dots, \phi_k(x) = y_k\] are jointly independent, for any given $(y_1, \dots, y_k) \in \F_2^k$. On the other hand we do not have such an equivalence for restrictions to $\{0,1\}^n$ of mod-$p$ forms. For instance, the mod-$p$ forms $x_1$ and $x_1+x_2$ are linearly independent, but when $(x_1,x_2)$ is chosen uniformly at random in $\{0,1\}^2$, conditioning on the event $x_1 = 0$ decreases (from $1/4$ to $0$) the probability of the event $x_1+x_2 = 2$. Returning to the topic of Fourier analysis, likewise the character-like functions defined by $\omega_p^{\phi(x)}$ with $\phi$ some mod-$p$ form do not have their inner products of the type \[\E_{x \in \{0,1\}^n} \omega_p^{\phi_1(x)} \omega_p^{-\phi_2(x)} \] equal to $0$ in general, even if the mod-$p$ forms $\phi_1, \phi_2$ are linearly independent, although these inner products become close to $0$ if the difference $\phi_1 - \phi_2$ has large support.

It is nonetheless not the case that these restrictions to $\{0,1\}^n$ of mod-$p$ forms are so unstructured as to not lead themselves to basic statements analogous to those that would arise from linear forms $\F_2^n \to \F_2$, or more generally from unrestricted linear forms over finite fields. For instance, a special case of the fact above - the fact that \[\E_{x \in \{0,1\}^n} \omega_p^{\phi(x)}\] is close to $0$ if $\phi$ has large support - 
was used by Gowers and the author in \cite{Gowers and K approximation} to show that subsets of $\{0,1\}^n$ satisfying systems of conditions of the type \[\phi_1(x) \in E_1, \dots, \phi_k(x) \in E_k\] with $\phi_1, \dots, \phi_k$ some mod-$p$ forms and $E_1, \dots, E_k \subset \F_p$ can be arbitrarily well approximated from the inside by sets defined in a similar way but using only a bounded number of conditions.

In the setting of polynomials, it was established \cite{Gowers and K equidistribution} by the same authors that a low-degree polynomial $\F_p^n \to \F_p$ which is not approximately equidistributed on $\{0,1\}^n$ can be expressed (up to a polynomial vanishing on $\{0,1\}^n$) in a simple way in terms of a bounded number of polynomials with strictly smaller degree, a property that had previously been shown by Green and Tao \cite{Green and Tao} to be true when the assumption held for the distribution of this polynomial on the whole of $\F_p^n$, and then studied further both qualitatively and quantitatively in that unrestricted setting by various authors such as Lovett, Kaufman, Bhowmick, Janzer, Milicevic, Moshkovitz, Cohen, Zhu, Ziegler, Kazhdan, and Adiprasito. Later, it was also shown \cite{K} that under assumptions on the behaviour of the polynomial on $\{0,1\}^n$ involving its range rather than its distribution, we can furthermore require that the polynomial be expressed in a simple way in terms of polynomials of yet smaller degree.

What the three previous results have in common is that a property defined using linear forms or polynomials $\F_p^n \to \F_p$ was shown to also hold or to have a simple weakening after restriction to $\{0,1\}^n$, so it is reasonable to hope that this will be the case for more properties.

Besides the fact that some properties extend nicely to mod-$p$ forms, one concrete motivation for considering their behaviour on sets of the type $S^n$ for some strict subset $S$ of $\F_p$ are their applications to Ramsey theory, as several obstructions can be naturally formulated in terms of these mod-$p$ forms. For instance, one special case of the density Hales-Jewett theorem, a strengthening of the Hales-Jewett theorem \cite{Hales-Jewett} originally proved by Furstenberg and Katznelson \cite{Furstenberg and Katznelson k=3}, \cite{Furstenberg and Katznelson} and then proved again by the Polymath1 project \cite{Polymath}, states that for any fixed $\d>0$, there exists a positive integer $n$ such that every subset of $[3]^n$ with density at least $\d$ inside $[3]^n$ must contain a combinatorial line, that is, some triple $(x,y,z)$ of elements of $[3]^n$ such that for some partition $\{X_1,X_2,X_3,W\}$ of $[n]$ we have $x_i = y_i = z_i = j$ for every $j \in [3]$ and every $i \in X_j$, as well as $x_i = 0, y_i = 1, z_i = 2$ for every $i \in W$. If $A,B,C$ are subsets of $[3]^n$ such that $A \times B \times C$ contains a combinatorial line $(x,y,z)$, then for every mod-$p$ form $\phi$, linearity shows that the triple $(\phi(x), \phi(y), \phi(z))$ must be an arithmetic progression in $\F_p$. In other words, if for some mod-$p$ form $\phi$ the triple of images $(\phi(A), \phi(B), \phi(C))$ does not contain an arithmetic progression, then $A \times B \times C$ contains no combinatorial line. Similar obstructions defined using mod-$p$ forms, some of which involving $\{0,1\}^n$ rather than $\{0,1,2\}^n$, will be discussed and ruled out in the upcoming paper \cite{Gowers and K obstruction} in the context of a conjectured polynomial generalisation of the density Hales-Jewett conjecture and of its simplest unsolved case discussed in \cite{Gowers}.

In the present paper, rather than focus on obstructions we will use mod-$p$ forms to address a more basic kind of question which arises in numerous contexts as a way of measuring the expressive power of a set of functions or of data. Given a class $\cc$ of objects and a class $\cf$ of functions defined on $\cc$, how many objects in $\cc$ can we select such that for any two of them we can find a function in $\cf$ that clearly separates them ? In our case, the class $\cc$ of objects will be that of sufficiently dense subsets of the cube $\{0,1\}^n$ and the class $\cf$ of functions will be the functions sending a set in $\cc$ to the distribution of a mod-$p$ form on that set. Answering this question happens to furthermore be of independent Ramsey-theoretic interest, since it can be equivalently formulated as follows: how many dense subsets of the cube can we choose until we necessarily can find a pair of them for which the pair of distributions of every mod-$p$ form are not too far apart ?

\subsection{Notions of separation on the distributions of mod-$p$ forms}

We begin by defining a number of ways to measure proximity between two probability distributions.

\begin{definition}\label{Ways to measure proximity between two probability distributions}
	
	Let $m \ge 2$ be an integer. For any $m$-tuple $(X_1, \dots, X_m)$ of distributions each taking values in the same finite set $D$, we define the following quantities. 
\begin{enumerate}[(i)]
\item The \emph{diameter} $\Diam(X_1, \dots, X_m)$ is defined to be $\max\limits_{1 \le i < j \le m} \TV(X_i, X_j)$, where \[\TV(X_i, X_j)= \sum\limits_{d \in D} |\P(X_i=d) - \P(X_j=d)|\] is the total variation distance between $X_i$ and $X_j$ for all $i,j \in [m]$. 
\item The \emph{correlation} $\cor(X_1, \dots, X_m)$ is defined to be 
\[\big(\sum\limits_{d \in D} \P[X_1 = d] \dots \P[X_m = d]\big) - 1/|D|^{m-1}.\] 
\item The \emph{overlap} $\omega(X_1, \dots, X_m)$ is defined to be \[\sum\limits_{d \in D} \min(\P[X_1 = d], \dots, \P[X_m = d]).\]  \end{enumerate}
\end{definition}

We note in particular that if all but at most one of the variables $X_1, \dots, X_m$ are uniformly distributed (resp. approximately uniformly distributed), then $\cor(X_1, \dots, X_m)$ is zero (resp. small in absolute value), and that if $\omega(X_1, \dots, X_m)>0$, then the intersection of the ranges of $X_1, \dots, X_m$ is not empty.

These definitions allow us to establish a qualitative hierarchy in the extent to which distributions of random variables $X_1, \dots, X_m$ are not too far apart. Our definition for the correlation may be slightly surprising, but the centered expression \[\sum_{d \in D} (\P[X_1 = d] - |D|^{-1}) \dots (\P[X_m = d] - |D|^{-1}),\] which might appear at first to be more natural to consider, does not quantify the extent to which the $m$ distributions are similar: if for instance $m=3$, $D=\F_3$ and $X_1, X_2, X_3$ satisfy $\P[X_i = d] = \delta_{i,d}$ for all $i,d \in [3]$ then that quantity is equal to $2/9$, which is positive, even though the ranges of $X_1,X_2,X_3$ are pairwise disjoint. Each of the following properties is qualitatively stronger than the next, in the sense that whenever $i \in \{1,2,3\}$, for every choice of the parameters in the $i+1$th property which is sufficiently close to $0$ (in a manner that depends on $|D|$ and $m$), there is a choice of parameters (which may further depend on $|D|$ and $m$) in the $i$th property which implies it. \\

\begin{enumerate}
\item Close distributions for some $\eta >0$:  \[\Diam(X_1, \dots X_m) \le \eta.\]
\item Almost positive correlation for some $\nu>0$: \[\cor(X_1, \dots X_m) \ge - \nu.\]
\item Overlap bounded away from $0$ for some $A>0$: \[\omega(X_1, \dots X_m) \ge A.\]
\item Overlapping distributions: \[\omega(X_1, \dots X_m) > 0.\]
\end{enumerate}

\begin{proof} The first implication follows from H\"older's inequality: if $\Diam(X_1, \dots X_m) \le \eta$, then $\TV(X_1, X_i) \le \eta$ for each $i \in \lbrack m \rbrack$, so \[\sum_{d \in D} \P[X_1=d] \dots \P[X_m=d] \ge \sum_{d \in D} (\P[X_1=d]-\eta)^m \ge |D|^{-(m-1)} (1-|D| \eta)\] and hence \[\cor(X_1, \dots X_m) \ge - |D|^{m-2} \eta.\] The second implication follows from a short calculation: if $\cor(X_1, \dots X_m) \ge - \nu$ then \[\sum_{d \in D} \P[X_1 = d] \dots \P[X_m = d] \ge 1/|D|^{m-1} - \nu,\] so there exists $d \in D$ satisfying \[\P[X_1 = d] \dots \P[X_m = d] \ge 1/|D|^{m} - \nu/|D|,\] and hence \[\P[X_i = d] \ge 1/|D|^{m} - \nu/|D|\] for each $i \in \lbrack m \rbrack$, from which \[\omega(X_1, \dots, X_m) \ge 1/|D|^{m} - \nu/|D|\] follows. The third implication is immediate. \end{proof}

If $E$ is an event and $\ca$ is a non-empty subset of $\{0,1\}^n$ then we shall write $\P_{\ca}[E]$ for the probability that $x$ satisfies $E$ when $x$ is chosen uniformly at random inside $\ca$. If $F$ is a function defined on $\{0,1\}^n$ and $\ca, \cb$ are non-empty subsets of $\{0,1\}^n$, then we write $\TV_{\ca, \cb}(F)$ for the total variation distance between the distributions of $F(x)$ when $x$ is chosen uniformly at random from $\ca$ and when it is chosen uniformly at random from $\cb$. If $m \ge 2$ is a positive integer and $\ca_1, \dots, \ca_m$ are non-empty subsets of $\{0,1\}^n$, then we write $\Diam_{\ca_1, \dots, \ca_m}(F)$ for $\Diam(X_1, \dots, X_m)$, where for each $i \in \lbrack m \rbrack$ the distribution of the variable $X_i$ is the distribution of $F(x)$ for $x$ chosen uniformly at random in $\ca_i$. We define $\cor_{\ca_1, \dots, \ca_m}(F)$ and $\omega_{\ca_1, \dots, \ca_m}(F)$ in a similar way.

\subsection{Main results}

Starting with a number $s$ of non-empty subsets $\ca_1, \dots, \ca_s$ of $\{0,1\}^n$, we can ask whether it is possible to obtain a pair $(i,j)$ of distinct elements of $\lbrack s \rbrack$ such that all mod-$p$ forms have close distributions on the pair $(\ca_i, \ca_j)$. As we will now illustrate, this is not possible in general, even if the number $s$ grows exponentially with $n$.

In the case $p=2$, that can be seen from the fact that the $2^{n}-1$ non-zero linear forms $\phi_1, \dots, \phi_{2^n-1}: \F_2^n \to \F_2$ each have a different linear hyperplane of $\F_2^n$ as their kernels; since these hyperplanes have pairwise intersections of size $2^{n-2}$, the hyperplanes $\ca_i = \ker \phi_i$ with $i \in [2^n-1]$ are such that $\phi_i(\ca_i)$ only takes the value $0$ on  $\ca_i$ but takes the values $0$ and $1$ with equal probability on every $\ca_j$ with $j \neq i$.

For $p \ge 3$, it is no longer true in general that two different mod-$p$ forms have the same preimage of $0$ inside $\{0,1\}^n$, even if they are proportional: for instance the mod-$5$ forms $x_1 + x_2$ and $2x_1 + x_2$ both have the same preimage $\{x_1=x_2=0\}$ of $0$ inside $\{0,1\}^n$. However, we can find a set of mod-$p$ forms with size exponential in $n$ and within which this is the case. We begin by partitioning $[n]$ into pairwise disjoint sets $I_1,\dots,I_{2d}$ each of size $p$, with $d = \lfloor n/2p \rfloor$ and a remainder set $I_0$ with size at most $2p-1$. We then restrict our attention to the mod-$p$ forms $\phi$ of the type \[\phi(x) = a_1 x_1 + \dots + a_n x_n\] where for every $u \in [d]$, all coefficients $a_z$ with $z \in I_{2u-1} \cup I_{2u}$ are the same. We can then find a maximal set of such mod-$p$ forms with size at least $p^{d-1} = p^{\lfloor n/2p \rfloor-1}$ such that no two of them are proportional. We then index the forms in this set as $\phi_1, \dots, \phi_t$.

We then take $x$ to be the element of $\{0,1\}^n$ such that $x_i = 0$ whenever $i \in I_0$ or i in $I_u$ for some odd $u$, and $x_i = 1$ whenever $i \in I_u$ for some non-zero even $u$. We have $\phi_i(x) = 0$ for every $i \in [t]$, since the contribution of each set $I_u$ of coordinates is zero. Let now $i,j$ be distinct indices in $[t]$. Because $\phi_i$ and $\phi_{j}$ are not proportional, there exist distinct $u_1, u_2 \in [d]$ such that the pair $(A_{1,i}, A_{2,i})$ of coefficients of $\phi_i$ on $I_{2u_1-1} \cup I_{2u_1}$ and on $I_{2u_2-1} \cup I_{2u_2}$ is not proportional to the pair $(A_{1,j}, A_{2,j})$ of coefficients of $\phi_j$ on $I_{2u_1-1} \cup I_{2u_1}$ and on $I_{2u_2-1} \cup I_{2u_2}$. Viewing $A_{1,i}, A_{2,i}, A_{1,j}, A_{2,j}$ as integers in $[0,p-1]$, we use $x$ to define a new element $y$ of $\{0,1\}^n$ by changing $A_{2,i}$ of the coordinates of $x$ in $I_{2u_1-1}$ to $1$ and changing $A_{1,i}$ of the coordinates of $x$ in $I_{2u_2}$ to $0$. We then have \begin{align*} \phi_i(y) - \phi_i(x) &= A_{2,i} A_{1,i} - A_{1,i} A_{2,i} = 0\\
\phi_j(y) - \phi_j(x) &= A_{2,i} A_{1,j} - A_{1,i} A_{2,j} \neq 0, \end{align*} so $\phi_i(y) = 0$ but $\phi_j(y) \neq 0$. We have shown that for every pair $(i,j)$ of distinct elements of $[t]$, the set $\phi_i^{-1}(0)$ is not contained in the set $\phi_j^{-1}(0)$.

One of the tools that we will introduce later, Proposition \ref{Lower bound on non-zero probabilities}, shows that any event on $\{0,1\}^n$ defined by a bounded number $k$ of mod-$p$ forms has probability either equal to 0 or bounded below in a way that depends on $p$ and $k$ only. This implies in particular a positive lower bound that depends only on $p$ on the probability, for $x$ chosen uniformly in $\{0,1\}^n$, of the event \[\phi_j(x) = 0, \phi_i(x) \neq 0,\] and this now allows us to conclude as we did for $p=2$.

\begin{example}\label{No close distributions} Let $p \ge 2$ be a prime. The subsets $\ca_1 = \phi_1^{-1}(0), \dots, \ca_t = \phi_t^{-1}(0)$ of $\{0,1\}^n$ each have density at least $2^{-(p-1)}$ inside $\{0,1\}^n$ and satisfy \[\TV_{\ca_i,\ca_j} \phi_i \ge \P_{\ca_i} [\phi_i = 0] - \P_{\ca_j} [\phi_i = 0] \ge 2^{-2(p-1)}\] for any pair $(i,j)$ of distinct elements of $[t]$.

\begin{proof} The lower bound on the densities of the subsets $\ca_i$ follows from Proposition \ref{Lower bound on non-zero probabilities}. The first inequality follows from the definition. The second follows from the calculation \begin{align*} \P_{\ca_i} [\phi_i = 0] - \P_{\ca_j} [\phi_i = 0] & = 1 - \P[\phi_i = 0, \phi_j = 0]/\P[\phi_j = 0] \\ & = 1 - (\P[\phi_j = 0] - \P[\phi_j = 0, \phi_i \neq 0]) / \P[\phi_j = 0] \\ & \ge 1 - (\P[\phi_j = 0] - 2^{-2(p-1)}) / \P[\phi_j = 0] \\ & \ge 1 - (1-2^{-2(p-1)}) \end{align*} where all probabilities on the right-hand side are taken with respect to $x$ chosen uniformly in $\{0,1\}^n$, and where in the third line we apply Proposition \ref{Lower bound on non-zero probabilities}.\end{proof} \end{example}

We will nonetheless begin by showing that close distributions can be obtained if we only consider mod-$p$ forms with bounded support size and assume the number of dense subsets of $\{0,1\}^n$ to grow superpolynomially in $n$.

\begin{proposition}\label{Close distributions for all functions depending on a bounded number of coordinates} Let $\delta>0$, let $k \ge 1$ be an integer, and let $\nu>0$. Then there exists $c_{\TV}(\d, \nu, k)>0$ such that that the following holds. If $s$ is a positive integer and $\ca_1, \dots, \ca_s$ are non-empty subsets of $\{0,1\}^n$ with density at least $\delta$ inside $\{0,1\}^n$, there exists a positive integer $N(\nu, k)$ such that for all $n \ge N(\nu, k)$, there exists a subset $S \subset \lbrack s \rbrack$ with size at least $s/(n+1)^{c_{\TV}(\d, \nu, k)}$ such that \[\Diam_{\ca_i: i \in S} g \le \nu \] for every function $g$ defined on $\{0,1\}^n$ and determined by at most $k$ coordinates.
\end{proposition}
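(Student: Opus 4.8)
The plan is to reduce the problem to a Ramsey-type pigeonhole on a bounded-size "profile" attached to each set $\ca_i$, exploiting the fact that a function $g$ determined by at most $k$ coordinates is governed by its values on only $2^k$ points, and that the relevant coordinate set ranges over $\binom{n}{k}$ possibilities. First I would observe that a function $g$ determined by the coordinates in a fixed set $T \subset [n]$ with $|T| \le k$ is completely described by the tuple $(g(\tau))_{\tau \in \{0,1\}^T}$, and the distribution of $g$ on $\ca_i$ is a pushforward of the distribution of the restriction $x \mapsto x|_T$ of a uniform element of $\ca_i$. Hence if, for two sets $\ca_i, \ca_j$, the distributions of $x|_T$ agree up to total variation $\eta$ for \emph{every} $T$ of size $k$, then $\Diam_{\ca_i,\ca_j} g \le \eta$ for every $g$ determined by at most $k$ coordinates (taking $T$ to be a $k$-element superset of the coordinates $g$ depends on). So it suffices to find a large subset $S$ of $[s]$ such that for all $i,j \in S$ and all $|T|=k$, $\TV(x|_T^{(i)}, x|_T^{(j)}) \le \nu$.

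Next I would discretise the profile. For each $\ca_i$ and each $k$-set $T$, the distribution of $x|_T$ is a point in the simplex of probability vectors on $\{0,1\}^T$, a space of dimension $2^k-1$; I would round each such distribution to the nearest point of a fixed finite $\epsilon$-net of this simplex, where $\epsilon = \epsilon(\nu,k)$ is chosen so that two distributions rounding to the same net point are within total variation $\nu$. The full profile of $\ca_i$ is then the tuple, over all $\binom{n}{k} \le (n+1)^k$ choices of $T$, of these rounded distributions; the number of possible profiles is at most $M^{(n+1)^k}$ where $M = M(\nu,k)$ is the (finite) net size. By pigeonhole there is a subset $S \subset [s]$ with $|S| \ge s / M^{(n+1)^k}$ all of whose members share the same profile, and any two such members $\ca_i, \ca_j$ satisfy $\TV(x|_T^{(i)}, x|_T^{(j)}) \le \nu$ for every $|T|=k$, hence $\Diam_{\ca_i: i\in S} g \le \nu$ for every $g$ determined by at most $k$ coordinates. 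Choosing $c_{\TV}(\delta,\nu,k)$ so that $(n+1)^{c_{\TV}(\delta,\nu,k)} \ge M^{(n+1)^k}$ — which forces $c_{\TV}$ to depend on $n$, so this crude bound is not quite of the advertised form — shows that the naive argument gives only a quasipolynomial, not polynomial, saving.

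The main obstacle, therefore, is obtaining the polynomial denominator $(n+1)^{c_{\TV}(\delta,\nu,k)}$ with $c_{\TV}$ independent of $n$, rather than the quasipolynomial $M^{(n+1)^k}$ that the direct product-of-nets pigeonhole yields. To fix this I would not demand that the entire profile agree, but argue coordinate-block by coordinate-block or via an iterative/greedy refinement: one processes the $k$-sets $T$ in some order and at each stage passes to a subset of the surviving indices on which the rounded distribution for the current $T$ is constant, but crucially one only needs to control $T$'s that are "active" in the sense of carrying the variation — and a counting argument using the density lower bound $\delta$ (so that marginals cannot be too degenerate) should show that only $O_{\delta,\nu,k}(\log n)$ or even $O_{\delta,\nu,k}(1)$ blocks need to be pinned down before the remaining freedom is automatically controlled, because a dense set cannot have many "independent" highly non-uniform $k$-marginals. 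The role of $\delta$ here is exactly to bound the complexity of the profile: without it the sets could encode arbitrarily much information in their low-order marginals. I expect the clean execution of this refinement — identifying the correct notion of an active coordinate block and bounding their number independently of $n$ — to be the crux of the proof; the rest is the routine net construction and pigeonhole sketched above.
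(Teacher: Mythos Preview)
Your reduction to controlling the marginals $x|_T$ for all $k$-sets $T$, and your diagnosis that the naive product-pigeonhole gives only a quasipolynomial saving, are both correct and match the paper's setup. Your intuition for the fix --- that a set of density at least $\delta$ can have only $O_{\delta,\nu,k}(1)$ ``active'' $k$-marginals, and that pinning those down forces all the others --- is also exactly what the paper does. What is missing from your proposal is precisely the mechanism you flagged as the crux: a usable definition of ``active'' and a proof that there are boundedly many of them independent of $n$.

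The paper supplies this via entropy. For a set $\ca$ one attaches to each subset $T \subset \mathcal{F}_0$ (with $\mathcal{F}_0$ the coordinate forms) the negentropy $\J_\ca(T) = |T|\log 2 - \HH_\ca(x|_T)$, and calls $T$ \emph{irreducible} if $\J_\ca(T)$ exceeds $\sum_i \J_\ca(T_i)$ by at least $\eta$ for every nontrivial partition $T = \bigcup_i T_i$. Two facts then do the work. First, the total negentropy $\J_\ca(x_1,\dots,x_n)$ is at most $\log \delta^{-1}$, and a sunflower argument converts this into an upper bound $K = K(\delta,\eta,k)$ on the number of irreducible sets of size at most $k$ (many irreducible sets would form a large sunflower, and the superadditivity of negentropy across the petals would force $\J_\ca$ to exceed $\log\delta^{-1}$). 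Second, an induction shows that any $T$ disjoint from the union $I$ of all irreducible sets satisfies $\J_\ca(T \mid U) \le (2k-1)\eta$ for any small $U$, so conditioning on $I$ makes every remaining $k$-marginal approximately uniform. Since $|I| \le kK$ is bounded independently of $n$, there are at most $O(n^{kK})$ possibilities for $I$; pigeonholing on $I$ and then on a net for the distribution of $x|_I$ gives the polynomial saving $s/(n+1)^{c_{\TV}}$.

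So your outline is right; the concrete step you were missing is the negentropy/irreducibility formalism together with the sunflower bound on the number of irreducible sets.
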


Proposition \ref{Close distributions for all functions depending on a bounded number of coordinates} does not involve the structure of the function $g$, and in particular does not require $g$ to be a mod-$p$ form. However, we will use some of the lemmas and ideas involved in the proof of Proposition \ref{Close distributions for all functions depending on a bounded number of coordinates} in our later proofs, and in the case where $g$ is a mod-$p$ form it seems worthwhile to compare its conclusion to those that we can or cannot obtain when $g$ is not assumed to have bounded support size, as we will do in the final summarising table of the present section.

We note that the bound that we get in Proposition \ref{Close distributions for all functions depending on a bounded number of coordinates} cannot be replaced by a sufficiently low power of $n$, even for a fixed value of $k$ and even if we are only aiming for a conclusion on all mod-$p$ forms with bounded support rather than on more general functions.

\begin{example}

Let $k \ge 1$ be an integer. For every subset $I$ of $[n]$ with size $k$ let $\ca_I$ be the set of $x \in \{0,1\}^n$ such that $x_i = 0$ for every $i \in I$, and let $\phi_I$ be the mod-$p$ form such that $\phi(x)$ is defined to be the sum of the coordinates $x_i$ with $i \in I$. Then the sets $\ca_I$ each have density at least $2^{-(p-1)}$, and whenever $I,J$ are distinct subsets of [n] with size $k$ we have \[\P_{\ca_I}(\phi_I = 0) - \P_{\ca_J}(\phi_I = 0) \ge 1/2.\]

\end{example}

\begin{proof} The lower bound on the densities of the sets $\ca_I$ comes from Proposition \ref{Lower bound on non-zero probabilities}. The inequality follows from pairing elements of $\{0,1\}^n$ according to one of the coordinates in $J \setminus I$. For any fixed choice of any of the other coordinates, either zero or both elements of the pair belong to $\ca_I$ but at most one element of each pair belongs to $\ca_J$. \end{proof}

Let us now examine what we can or cannot hope for in that case, where we ask for a result on all mod-$p$ forms rather than merely those with bounded support size. We may first aim for a weaker conclusion and ask for a pair $(i,j)$ of distinct elements of $\lbrack s \rbrack$ such that all mod-$p$ forms have almost positive correlation on the pair $(\ca_i, \ca_j)$. If the number $s$ of sets $\ca_i$ is only known to tend to $\infty$ with $n$ then the following example shows that for $p \ge 3$ it is still not possible in general to obtain such a pair. To state it, it will be convenient to identify elements $x \in \{0,1\}^n$ with respective subsets $A \subset [n]$ defined by $A = \{i \in [n]: x_i = 1\}$. We note that this example does not apply to $p=2$, as it then leads to a correlation of zero, rather than to a negative correlation.

\begin{example} Let $p \ge 3$ be a prime, let $s= \lfloor n/p \rfloor$, let $Z_1, \dots, Z_s$ be pairwise disjoint subsets of $[n]$ each with size $\frac{p-1}{2}$, and let \[\ca_i:= \{ A \subset [n]: |A \cap Z_i| = 0\}\] for each $i \in \lbrack s \rbrack$. Then for each pair $(i,j)$ of distinct elements of $\lbrack s \rbrack$ the correlation $\cor_{\ca_i,\ca_j} \phi_{i,j}$ of the form \[\phi_{i,j}: A \mapsto |A \cap Z_i| - |A \cap Z_j|\] is equal to $2^{-(p-1)} - 1/p$, which is negative. \end{example}

However, we shall show that the statement that we have just been aiming for becomes true if we modify it in either of two ways. In one direction, if $s$ is superpolynomial in $n$ then it is always possible to find the desired pair $(i,j)$ such that every mod-$p$ form has almost positive correlation on the pair $(\ca_i, \ca_j)$. In another direction, merely having $s$ tend to infinity with $n$ is enough to ensure the existence of such a pair $(i,j)$ of distinct elements of $\lbrack s \rbrack$ such that every mod-$p$ form has overlap bounded below on the pair $(\ca_i, \ca_j)$ by some function of $p$ only. The following two theorems will be the main results of this paper.

\begin{theorem}\label{Almost positive correlations}

Let $p$ be a prime, let $m \ge 2$ be an integer, and let $\delta>0, \nu>0, \epsilon>0$. Then there exists $C_{\cor}(\delta, \nu, \epsilon,p,m) > 0$ such that the following holds. If $s$ is a positive integer and $\ca_1, \dots, \ca_s$ are non-empty subsets of $\{0,1\}^n$ with density at least $\delta$ inside $\{0,1\}^n$, then there exists a subset $R$ of $\lbrack s \rbrack$ with size at least $s/(n+1)^{C_{\cor}(\delta, \nu, \epsilon,p,m)}$ such that for at least $(1-\epsilon)|R|^m$ of $m$-tuples $(i_1, \dots, i_m) \in R^m$ we have \[\cor_{\ca_{i_1}, \dots, \ca_{i_m}} \phi \ge - \nu\] for every mod-$p$ form.

\end{theorem}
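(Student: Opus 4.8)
The plan is to reduce the statement about all mod-$p$ forms to a statement about a bounded number of "coordinates" after a suitable partitioning of $[n]$, and then apply a pigeonhole/Ramsey-type argument on profiles. First I would invoke Proposition \ref{Lower bound on non-zero probabilities} repeatedly: it guarantees that any event cut out by a bounded number of mod-$p$ forms has probability either $0$ or bounded below by a function of $p$ and the number of forms. The key structural observation is that the distribution of a mod-$p$ form $\phi(x) = a_1x_1 + \dots + a_nx_n$ on a set $\ca$ is governed, up to the error controlled by Proposition \ref{Close distributions for all functions depending on a bounded number of coordinates}, by the behaviour of $\ca$ on the coordinate blocks where the coefficients of $\phi$ coincide. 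Concretely, I expect one should merge coordinates into blocks according to common coefficient values, reducing a general mod-$p$ form to one supported on at most $p-1$ "super-coordinates" (the coefficient $0$ being irrelevant), so that the distribution of any mod-$p$ form is determined by marginals of $\ca$ on boundedly many coordinates in this coarsened sense. This is exactly the regime where Proposition \ref{Close distributions for all functions depending on a bounded number of coordinates} applies.

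Next I would set up the pigeonhole. For each set $\ca_i$, associate a "profile" recording the approximate conditional distributions of the relevant bounded families of coordinate-configurations, discretised to a grid of mesh depending on $\nu, \epsilon, p, m$. The number of possible profiles is bounded by a constant $K = K(\delta, \nu, \epsilon, p, m)$ independent of $n$ — but the families of coordinates themselves vary with $n$, so one needs a preliminary step isolating a common "core" set of coordinates (of bounded size, chosen so that the marginals on coordinates outside it are essentially uniform and hence contribute negligibly, using the density lower bound $\delta$ together with Proposition \ref{Lower bound on non-zero probabilities}). After fixing the core, the number of profiles is bounded, so the largest profile class $R$ has size at least $s/(n+1)^{C}$ for an appropriate $C$, where the factor $(n+1)^C$ accounts for the number of ways of choosing the bounded core inside $[n]$ together with the bounded profile data. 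Within $R$, any two sets have matching marginals on the core and near-uniform marginals elsewhere, so for any mod-$p$ form $\phi$ the distributions of $\phi$ on $\ca_{i_1}, \dots, \ca_{i_m}$ are close to the distribution determined by the common core profile, and the chain of implications in Definition \ref{Ways to measure proximity between two probability distributions} (Hölder plus the short calculation in the displayed proof) converts this closeness into $\cor_{\ca_{i_1}, \dots, \ca_{i_m}} \phi \ge -\nu$. The role of $\epsilon$ is to absorb the $m$-tuples using coordinates where the discretisation is too coarse for some particular $\phi$; one counts that these form at most an $\epsilon$-fraction.

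The main obstacle I anticipate is handling mod-$p$ forms whose support is large but whose coefficient pattern interacts nontrivially with the sets $\ca_i$ on a large but not-quite-uniform coordinate block — i.e. ensuring that "outside the bounded core, everything is essentially uniform" is actually true in the strong sense needed. A mod-$p$ form can have all coefficients equal to $1$ on a huge block, and the distribution of such a form on $\ca_i$ is a sum of many bits which need not be close to uniform mod $p$ if $\ca_i$ is, say, a subcube; so the reduction is not literally to bounded-coordinate functions and one must argue that any residual non-uniformity is itself captured by a bounded amount of data (e.g. the block sizes mod $p$ and the conditional densities on each block). I expect the correct formulation is an iteration: repeatedly extract the coordinate(s) on which some $\ca_i$ is most biased, add them to the core, and stop after boundedly many steps once all sets are near-uniform on the complement conditionally on the core — a standard energy-increment argument, with the increment bounded below by Proposition \ref{Lower bound on non-zero probabilities}. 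Making this iteration uniform over all mod-$p$ forms simultaneously, rather than one form at a time, is the delicate point, and is presumably where the $\epsilon$ slack and the quantitative dependence on $m$ enter.
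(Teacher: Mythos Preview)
Your proposal has a genuine gap: the reduction to a bounded ``core'' of coordinates cannot work, because the bias of a dense set $\ca_i$ need not be visible on any bounded collection of coordinates at all. Take $\ca_i = \phi_i^{-1}(0)$ for a mod-$p$ form $\phi_i$ with full support (as in Example~\ref{No close distributions}): every individual coordinate, and indeed every bounded tuple of coordinates, is close to uniform on $\ca_i$, yet $\phi_i$ itself is maximally biased. Your energy-increment iteration ``extract the coordinate on which some $\ca_i$ is most biased'' would never extract anything in this case. The block-merging idea does not rescue this: the blocks depend on the form $\phi$, and there is no common block structure across all forms simultaneously, which is precisely the difficulty you flag at the end but do not resolve.

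The paper's approach bypasses coordinates entirely at the key step. Proposition~\ref{The set of mod p forms not about uniform on a dense subset of the cube is contained in boundedly many balls} shows that for each $\ca_i$ the set of mod-$p$ forms biased on $\ca_i$ is covered by a bounded number $B$ of balls of bounded radius $r$ in the \emph{space of mod-$p$ forms} (with the support-distance metric). The centres $\Phi_i$ of these balls vary with $i$, so one cannot simply pigeonhole; instead the paper iterates a sunflower-type lemma (Lemma~\ref{Common element lemma}, metric version) to pass to a dense subset of indices on which the relevant ball-centres can be taken from a common bounded set $\Phi$. One then adjoins $\Phi$ to the coordinate forms and applies the irreducible-set machinery to the enlarged family $\mathcal{F}_0 \cup \Phi$ (Proposition~\ref{Set R for the proof of correlation}): any form in one of the common balls is a function of at most $3r+B$ elements of this family, so Proposition~\ref{Establishing proximities in distribution} gives close distributions; any form outside all common balls is near-uniform on all but at most one $\ca_{i_t}$, which already forces the correlation to be near zero. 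The $\epsilon$-exceptional tuples come from the pairs where the sunflower iteration fails, not from discretisation error.
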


\begin{theorem}\label{Overlap bounded below}

Let $p$ be a prime, let $m \ge 2$ be an integer, and let $\delta>0, \nu>0, \epsilon>0$. Then there exists $c_{\omega}(\delta, \nu, \epsilon,p,m) > 0$ such that the following holds. If $s$ is a positive integer and $\ca_1, \dots, \ca_s$ are non-empty subsets of $\{0,1\}^n$ with density at least $\delta$ inside $\{0,1\}^n$, then there exists a subset $Q$ of $\lbrack s \rbrack$ with size at least $c_{\omega}(\delta, \nu, \epsilon,p,m)s$ such that for at least $(1-\epsilon)|Q|^m$ of the $m$-tuples $(i_1, \dots, i_m) \in Q^m$ we have \[\omega_{\ca_{i_1}, \dots, \ca_{i_m}} \phi \ge 2^{-(m+1)(p-1)} - \nu \] for every mod-$p$ form.

\end{theorem}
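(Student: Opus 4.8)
The plan is to follow the same overall route as for Theorem~\ref{Almost positive correlations} but to replace the step that costs a power of $n$ by a pigeonhole over a bounded set of ``types''. First, for each $\ca_i$ I would run a density-increment/regularity argument in the spirit of the lemmas behind Proposition~\ref{Close distributions for all functions depending on a bounded number of coordinates}: there is a bounded collection of mod-$p$ forms $\chi_{i,1},\dots,\chi_{i,r_i}$ with $r_i\le r_0(\delta,p,\eta)$ such that, after partitioning $\{0,1\}^n$ into the at most $p^{r_i}$ level sets of $(\chi_{i,1},\dots,\chi_{i,r_i})$, the restriction of $\ca_i$ to each such level set is $\eta$-pseudorandom, in the sense that every mod-$p$ form $\psi$ for which $\psi$ minus each $\F_p$-combination of the $\chi_{i,j}$ has support of size at least $L(\delta,p,\eta)$ has distribution on $\ca_i$ within $\eta$ in total variation of its distribution on $\{0,1\}^n$, while every other $\psi$ coincides on $\ca_i$, up to one additive constant per level set, with a form of support size at most $L$ (namely $\psi$ minus the nearest combination of the $\chi_{i,j}$). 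The increment terminates after a bounded number of steps because each step raises a bounded quantity — the maximal conditional density of $\ca_i$ over a level set — by a fixed multiplicative amount, and the density is at most $1$.

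Next comes the key step. To each $\ca_i$ I would attach a ``type'' $\tau_i$ lying in a set whose size is bounded in terms of $\delta,\nu,\epsilon,p,m$ only and \emph{not} in terms of $n$: it records $r_i$, the distribution on $\ca_i$ of $\langle c,\chi_{i,\bullet}\rangle$ for each $c\in\F_p^{r_i}$ rounded to precision $\eta$, and, for each coefficient multiset $M$ over $\F_p^\ast$ of size at most $L$, whether and with which combination $c$ the form $\langle c,\chi_{i,\bullet}\rangle$ can occur as the ``combination part'' of some $\psi$ whose remainder has coefficient multiset $M$. This data involves only sizes, residues and multisets, never the positions of supports inside $[n]$, so there are only boundedly many possible types; by pigeonhole there is $Q\subseteq[s]$ with $|Q|\ge c_\omega(\delta,\nu,\epsilon,p,m)\,s$ on which $\tau_i$ is constant. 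This is exactly where one avoids the $(n+1)^{C}$ loss incurred in Theorem~\ref{Almost positive correlations}.

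Then I would fix an $m$-tuple $(i_1,\dots,i_m)\in Q^m$ and a mod-$p$ form $\psi$. If $\psi$ is generic in the above sense relative to every $\chi_{i_\ell,\bullet}$, then its distribution on each $\ca_{i_\ell}$ is within $\eta$ of its fixed distribution on $\{0,1\}^n$, so the overlap is at least $1-O(p\eta)$, which exceeds $2^{-(m+1)(p-1)}-\nu$. Otherwise let $T\subseteq[m]$ be the nonempty set of indices $\ell$ for which $\psi$ is non-generic relative to $\chi_{i_\ell,\bullet}$; for $\ell\in T$ write $\psi=\chi_\ell+\rho_\ell$ as forms, where $\chi_\ell=\langle c_\ell,\chi_{i_\ell,\bullet}\rangle$ and $\rho_\ell$ has support of size at most $L$. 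Using the common type I would pick a single residue $e$ with $\P_{\ca_{i_\ell}}[\chi_\ell=e]$ bounded below for every $\ell\in T$, and then observe that for a suitable value $d$ the event $\{\chi_\ell=e \text{ for all } \ell\in T\}\cap\{\psi=d\}$ — equivalently $\{\chi_\ell=e,\ \rho_\ell=d-e \text{ for all } \ell\in T\}$ — is nonempty, since one may take a point vanishing on all the $\rho_\ell$-supports with each $\chi_\ell$ equal to $e$, forcing $d=e$. As this event is defined by at most $|T|+1\le m+1$ mod-$p$ forms, Proposition~\ref{Lower bound on non-zero probabilities} gives it probability at least $2^{-(m+1)(p-1)}$ in $\{0,1\}^n$; combining this with the $\eta$-pseudorandom structure of each $\ca_{i_\ell}$ on the relevant level sets yields $\P_{\ca_{i_\ell}}[\psi=d]\ge 2^{-(m+1)(p-1)}-\nu$ for every $\ell\in[m]$ (with value close to $1/p$ when $\ell\notin T$), hence the claimed overlap bound. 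A negligible proportion of $m$-tuples, on which the accumulated $\eta$-errors overwhelm $\nu$, is discarded to obtain the ``at least $(1-\epsilon)|Q|^m$'' conclusion.

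I expect the main obstacle to be the second step: pinning down a bounded type that is simultaneously genuinely $n$-independent and fine enough that matching types across the tuple forces the relevant combinations $\chi_\ell$ and remainders $\rho_\ell$ to be coordinated enough for the nonemptiness argument to produce a \emph{common} value $d$, and fine enough that the constant emerges as exactly $2^{-(m+1)(p-1)}$ rather than something weaker. The quantitative bookkeeping of the various thresholds $\eta$ and $L$, so that all slacks fit inside the single final $\nu$ and only an $\epsilon$-proportion of tuples is lost, is the other delicate point.
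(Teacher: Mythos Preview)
Your first step --- a bounded regularisation of each $\ca_i$ --- is essentially what the paper does via Proposition~\ref{The set of mod p forms not about uniform on a dense subset of the cube is contained in boundedly many balls}. The genuine gap is at the second and third steps, exactly where you anticipate it, and your sketch of the third step does not close it. Matching the type $\tau_i$ across an $m$-tuple gives you no control over the \emph{joint} behaviour of the structural forms attached to different indices. Concretely: if $\psi$ is non-generic for both $\ca_{i_1}$ and $\ca_{i_2}$, you write $\psi=\chi_1+\rho_1=\chi_2+\rho_2$, but $\chi_1$ lies in the span of $\chi_{i_1,\bullet}$ and $\chi_2$ in the span of $\chi_{i_2,\bullet}$, which are in general unrelated families of forms. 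Your nonemptiness claim (``take a point with each $\chi_\ell=e$ and vanishing on the $\rho_\ell$-supports'') needs the system $\chi_\ell(x)=e$, $\ell\in T$, to be solvable in $\{0,1\}^n$, and a shared type says nothing about this. Even granting nonemptiness, the transfer step fails: pseudorandomness of $\ca_{i_\ell}$ is relative only to its own family $\chi_{i_\ell,\bullet}$, so an event involving all the $\chi_{\ell'}$ with $\ell'\neq\ell$ need not have comparable probability on $\ca_{i_\ell}$ and on $\{0,1\}^n$. No $n$-free type of the kind you describe records enough to force these cross-index constraints.

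The paper does not attempt an $n$-independent typing at all. Instead of pigeonholing, it iterates the sunflower-like Lemma~\ref{Common element lemma}: first in its metric-entropy form on the bounded families $\Phi_i$ of structural forms, and then in its set form on the bounded supports $I^{(i)}=I_{\le 3r+B}(\ca_i,\mathcal{F}_0\cup\Phi,\eta)$ of the irreducible families. Each iteration costs only a constant factor in density because the sets being processed have size bounded independently of $n$, and one emerges with a single common family $\Phi$ and a single common core $I\subset\mathcal{F}_0\cup\Phi$ such that for a $(1-\epsilon)$-proportion of $m$-tuples in the resulting positive-density $Q$ the private parts $I^{(i_\ell)}\setminus I$ are \emph{pairwise disjoint} (Proposition~\ref{Set Q for the proof of overlap}). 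This disjointness is exactly what drives the overlap calculation (Proposition~\ref{Obtaining an overlap bounded below}): one decomposes $\phi=\phi_0+\sum_{\ell=1}^m\phi_\ell+\phi_{\mathrm{out}}$ along $I$, the private parts, and the remainder; by disjointness and Lemma~\ref{Low conditional negentropy}, each $\phi_i$ has approximately uniform conditional distribution on every $\ca_{i_j}$ with $j\neq i$, so one chooses values $v_1,\dots,v_m$ sequentially, invoking Proposition~\ref{Lower bound on non-zero probabilities} on a single form at each step to get the factor $2^{-(p-1)}$. The missing idea in your plan is therefore to replace ``pigeonhole on bounded types'' by ``iterated sunflower on bounded sets''; the latter buys precisely the pairwise disjointness of the private parts that your coordination argument lacks.
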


Our results and counterexamples allow us to obtain the following table, which summarises for any prime $p \ge 3$ and any integer $m \ge 2$ whether from a collection of dense subsets of the cube we can always find an $m$-tuple of subsets guaranteeing the properties we have been discussing on all mod-$p$ forms simultaneously, depending on how many subsets we begin with.

\begin{table}[h]
	\begin{tabular}{p{4.5cm}|l|l}
		Number of subsets $\ca_i$ & Tending to $\infty$ & $\Omega(n^C)$ for all $C>0$  \\
		\hline
		Overlapping distributions & Yes & Yes \\
		\hline
		Overlap bounded below away from zero & Yes & Yes\\
		\hline
		Close distributions when the support has bounded size & No & Yes\\
		\hline
		Almost positive correlation & No & Yes\\
		\hline
		Close distributions & No & No
		
	\end{tabular}
\end{table}

We organise the remainder of the paper in two sections. In Section \ref{Section: Tools involved in the proofs} we will first recall and build various technical tools about the distributions of mod-$p$ linear forms as well as some information-theoretic tools. Then, Section \ref{Section: Proofs of the main theorems} will be devoted to the proofs of our main results, primarily Theorem \ref{Almost positive correlations} and Theorem \ref{Overlap bounded below}.

\section*{Acknowledgement}

The author thanks Timothy Gowers for introducing him to the set version of Lemma \ref{Common element lemma} at a very early stage in the process that ultimately led to the present paper.

\section{Tools involved in the proofs} \label{Section: Tools involved in the proofs}

\subsection{Basic results on mod-$p$ linear forms on the cube}

We begin this section by recalling properties on the distribution of mod-$p$ linear forms on $\{0,1\}^n$. Throughout this paper we will mainly use two facts regarding them, which are special cases of respectively \cite{Gowers and K approximation}, Proposition 2.4 and \cite{K}, Lemma 2.3. The first property informally states that a set of several mod-$p$ forms that is well-separated is approximately jointly equidistributed on $\{0,1\}^n$.

\begin{proposition} \label{Equidistribution of k-tuples of linear forms}

Let $p$ be a prime, and let $k \ge 1$, $r \ge 0$ be integers. If $\phi_1, \dots, \phi_k: \F_p^n \to \F_p$ are linear forms such that the support size of the linear combination $a_1 \phi_1 + \dots + a_k \phi_k$ is at least $r$ for every $(a_1, \dots, a_k) \in \F_p^k \setminus \{0\}$, then \[|\P_{x \in \{0,1\}^n}[\phi_1(x) = y_1, \dots, \phi_k(x) = y_k] - p^{-k}| \le (1-p^{-2})^r\] for every $(y_1, \dots, y_k) \in \F_p^k$.

\end{proposition}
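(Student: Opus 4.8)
The plan is to expand the indicator function of the event $\{\phi_1(x)=y_1,\dots,\phi_k(x)=y_k\}$ into additive characters of $\F_p$ and to exploit the independence of the coordinates of $x$ under the uniform measure on $\{0,1\}^n$. Writing $\omega_p = e^{2\pi i/p}$, Fourier inversion on $\F_p^k$ gives
\[\mathds{1}[\phi_1(x)=y_1,\dots,\phi_k(x)=y_k] = p^{-k}\sum_{a\in\F_p^k}\omega_p^{\sum_{j\in[k]}a_j(\phi_j(x)-y_j)},\]
and taking the expectation over $x$ chosen uniformly in $\{0,1\}^n$ while separating off the $a=0$ term yields
\[\P_{x\in\{0,1\}^n}[\phi_1(x)=y_1,\dots,\phi_k(x)=y_k]-p^{-k} = p^{-k}\sum_{a\in\F_p^k\setminus\{0\}}\omega_p^{-\sum_j a_jy_j}\,\E_{x\in\{0,1\}^n}\omega_p^{\psi_a(x)},\]
where $\psi_a := a_1\phi_1+\dots+a_k\phi_k$. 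Thus it suffices to show $|\E_x\omega_p^{\psi_a(x)}| \le (1-p^{-2})^r$ for every nonzero $a$: the triangle inequality over the $p^k-1$ surviving terms, together with the prefactor $p^{-k}$, then gives the claimed bound.

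To estimate the character sum of a single linear form $\psi(x)=\sum_{z\in[n]}c_zx_z$ with $c_z\in\F_p$, I would use that the coordinates $x_z$ are i.i.d.\ uniform on $\{0,1\}$, so that
\[\E_{x\in\{0,1\}^n}\omega_p^{\psi(x)} = \prod_{z\in[n]}\frac{1+\omega_p^{c_z}}{2} = \prod_{z\in\mathrm{supp}(\psi)}\frac{1+\omega_p^{c_z}}{2},\]
the factors with $c_z=0$ being $1$. Since $|1+\omega_p^{c}| = 2|\cos(\pi c/p)|$ when $c$ is viewed as an integer in $\{0,1,\dots,p-1\}$, each remaining factor has modulus $|\cos(\pi c_z/p)|$, which over $c_z\in\{1,\dots,p-1\}$ is maximised at $c_z\in\{1,p-1\}$ and hence is at most $\cos(\pi/p)$. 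Consequently $|\E_x\omega_p^{\psi_a(x)}| \le \cos(\pi/p)^{|\mathrm{supp}(\psi_a)|}$, and the hypothesis on $\phi_1,\dots,\phi_k$ forces $|\mathrm{supp}(\psi_a)|\ge r$ for every $a\ne 0$.

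What remains is the elementary inequality $\cos(\pi/p)\le 1-p^{-2}$, valid for every prime $p\ge2$; I would derive it from $\sin(\pi/p)\ge 2/p\ge 1/p$ (using $\sin\theta\ge 2\theta/\pi$ on $(0,\pi/2]$), which gives $\cos^2(\pi/p) = 1-\sin^2(\pi/p)\le 1-p^{-2}$ and a fortiori $\cos(\pi/p)\le 1-p^{-2}$. Combining, $|\E_x\omega_p^{\psi_a(x)}| \le (1-p^{-2})^{|\mathrm{supp}(\psi_a)|}\le (1-p^{-2})^r$ for each $a\ne 0$ (the case $r=0$ being trivial, as $(1-p^{-2})^0=1$ dominates both $p^{-k}$ and $1-p^{-k}$), and substituting into the identity above and bounding $\sum_{a\ne 0}$ by $(p^k-1)(1-p^{-2})^r\le p^k(1-p^{-2})^r$ completes the argument. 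This proof is essentially routine; the only delicate point is the trigonometric inequality, which if preferred can instead be checked by hand for small $p$ and by a Taylor estimate for large $p$.
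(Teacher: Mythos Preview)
The paper does not give its own proof of this proposition; it simply records it as a special case of \cite{Gowers and K approximation}, Proposition 2.4. Your argument is the standard Fourier-analytic one that underlies that reference, and it is essentially correct: expand the indicator into characters, factor the resulting exponential sum over coordinates using independence, and bound each nontrivial factor by $\cos(\pi/p)$.

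There is one genuine slip. From $\cos^2(\pi/p)\le 1-p^{-2}$ you deduce ``a fortiori'' that $\cos(\pi/p)\le 1-p^{-2}$, but the implication runs the wrong way: for $c\in(0,1)$ one has $c^2<c$, so $c^2\le 1-p^{-2}$ is \emph{weaker} than $c\le 1-p^{-2}$, not stronger. The inequality you want is nonetheless true, and your own ingredients give a clean fix: use the identity $1-\cos\theta=2\sin^2(\theta/2)$ together with $\sin(\theta/2)\ge \theta/\pi$ on $(0,\pi]$ to get
\[
1-\cos(\pi/p)=2\sin^2\!\bigl(\tfrac{\pi}{2p}\bigr)\ge 2\bigl(\tfrac{1}{2p}\bigr)^{\!2}\cdot\pi^{0}\quad\text{that is}\quad 1-\cos(\pi/p)\ge \tfrac{2}{p^{2}}\ge \tfrac{1}{p^{2}},
\]
so $\cos(\pi/p)\le 1-p^{-2}$ for every $p\ge 2$. (Your suggested fallback of checking small $p$ by hand and large $p$ by Taylor expansion also works, but is unnecessary.) With this correction the proof goes through.
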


The second property states that the probability that a $k$-tuple of mod-$p$ forms takes a given $k$-tuple of values is always either zero or bounded below by a quantity that depends only on $p$ and $k$. This fact is not necessary to obtain Theorem \ref{Almost positive correlations} but will play an important role in the proof of Theorem \ref{Overlap bounded below}.

\begin{proposition} \label{Lower bound on non-zero probabilities} Let $p$ be a prime, let $k \ge 1$ be an integer, and let $\phi_1, \dots, \phi_k: \F_p^n \to \F_p$ be linear forms. Then for any $(y_1, \dots, y_k) \in \F_p^k$ the probability \[\P_{x \in \{0,1\}^n}[\phi_1(x) = y_1, \dots, \phi_k(x) = y_k]\] is either $0$ or at least $2^{-k(p-1)}$. \end{proposition}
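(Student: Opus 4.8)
The plan is to prove the statement by induction on $k$, exploiting the fact that the non-zero values of a single mod-$p$ form on $\{0,1\}^n$ are witnessed by very small sets of coordinates, and then to condition carefully. For $k=1$, suppose $\phi_1(x) = a_1 x_1 + \dots + a_n x_n$ and let $y_1 \in \F_p$ be a value attained by $\phi_1$ on $\{0,1\}^n$. The key observation is that since each coefficient $a_i$ lies in $\{0, 1, \dots, p-1\}$ (viewing $\F_p$-coefficients as integers), one can attain $y_1$ using at most $p-1$ coordinates set to $1$: indeed, writing $y_1$ as an integer in $[0, p-1]$ and greedily selecting coordinates with non-zero coefficients, the integer partial sums increase by at most $p-1$ at each step, so some integer partial sum equals $y_1$, or more carefully one shows that there is always a choice of a set $T$ of coordinates with $|T| \le p-1$ such that setting $x_i = 1$ for $i \in T$ and $x_i = 0$ otherwise gives $\phi_1(x) = y_1$. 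Fixing those coordinates and letting the remaining ones be $0$ pins down a single point, but we want a probability bound, so instead we observe: the event $\{\phi_1 = y_1\}$ contains the subcube obtained by freezing the $\le p-1$ coordinates of $T$ to $1$, freezing the support coordinates not in $T$ to $0$ — wait, that over-restricts. The cleaner route is to note the event $\{\phi_1 = y_1\}$ contains a coset obtained by freezing at most $p-1$ coordinates, hence has probability at least $2^{-(p-1)}$; I will make this precise by the greedy argument above.

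Next, for the inductive step, I would proceed as follows. Assume the claim holds for $k-1$ forms. Given $\phi_1, \dots, \phi_k$ and a realizable tuple $(y_1, \dots, y_k)$, pick a point $x^* \in \{0,1\}^n$ with $\phi_i(x^*) = y_i$ for all $i$. The natural idea is to find a small set $T$ of coordinates such that freezing the coordinates in $T$ to their values in $x^*$ already forces all the events $\phi_i = y_i$, but that is false in general (the forms may have large support). Instead, the strategy is: among $\phi_1, \dots, \phi_k$, reduce to a "core" of forms whose restrictions to the relevant subcube are in some sense independent, and control the rest. Concretely, I would use a rank/elimination argument: after relabelling, assume $\phi_1, \dots, \phi_r$ span the same space as $\phi_1, \dots, \phi_k$ with $r \le k$, so the remaining forms are linear combinations; the event for $\phi_1, \dots, \phi_k$ equals the event for $\phi_1, \dots, \phi_r$ (with the induced values) — this only helps if $r < k$. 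When the forms are already independent, I would instead condition on one coordinate at a time: pick a coordinate $z$ in the support of some $\phi_i$, condition on $x_z = x^*_z$, which multiplies the probability by at least $1/2$ and replaces each $\phi_i$ by a form on $n-1$ coordinates with one fewer support element in that direction; iterating and tracking that we only ever need to freeze at most $k(p-1)$ coordinates total before every event becomes automatically satisfied yields the bound $2^{-k(p-1)}$.

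To organise the iteration rigorously, here is the precise inductive claim I would actually carry: for each $i$, by the $k=1$ analysis applied to $\phi_i$ (or rather to the difference between $\phi_i$ and the constant $y_i$ thought of appropriately), there is a set $T_i$ of at most $p-1$ coordinates that one can "use" to realize the contribution $y_i$. The subtlety is that the $T_i$ may overlap or conflict across different $i$. So I would instead argue: let $T = \bigcup_i T_i$, a set of size at most $k(p-1)$; show that there is an assignment to the coordinates in $T$ such that, conditioned on that assignment, each event $\phi_i = y_i$ either becomes certain or becomes an event of the same form in fewer variables that is still realizable — and then the probability of the whole conjunction is at least $2^{-|T|} \ge 2^{-k(p-1)}$ times the conditional probability, which by induction (now in $n - |T|$ variables, still $\le k$ forms) is at least ... but this double-counts the $2^{-(p-1)}$ factors. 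The correct bookkeeping: the final bound should come out to exactly $2^{-k(p-1)}$ with no induction on $n$ at all, because once we freeze one suitable set $T$ of size $\le k(p-1)$, all $k$ events hold simultaneously, giving probability $\ge 2^{-k(p-1)}$ directly. Thus the real content is the following combinatorial lemma, which is the main obstacle: \emph{given forms $\phi_1, \dots, \phi_k$ and a realizable value tuple $(y_1, \dots, y_k)$, there exist a set $T$ of at most $k(p-1)$ coordinates and an assignment $\sigma: T \to \{0,1\}$ such that every $x \in \{0,1\}^n$ extending $\sigma$ and equal to $0$ off $T \cup (\text{a fixed realizing point's zero set})$... } — more cleanly, \emph{such that the point $x$ with $x|_T = \sigma$ and $x_i = 0$ for $i \notin T$ satisfies $\phi_i(x) = y_i$ for all $i$.} I expect the hard part to be proving exactly this: showing one can simultaneously realize all $k$ target values using a common set of at most $k(p-1)$ "on" coordinates. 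I would prove it by taking any realizing point $x^*$, and for each $i$ peeling off a minimal subset of $\mathrm{supp}(x^*) \cap \mathrm{supp}(\phi_i)$ of size $\le p-1$ contributing the right residue $y_i \bmod p$ to $\phi_i$, using the greedy integer-partial-sum argument, and then handling the interaction between the $k$ peeled sets by a union bound on their sizes — being careful that the coordinates used for $\phi_i$ do not spuriously change $\phi_j$, which is where freezing the complement to $0$ is essential and why the bound is $k(p-1)$ rather than something smaller.
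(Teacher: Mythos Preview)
The paper does not give its own proof of this proposition: it is quoted as a special case of \cite{K}, Lemma~2.3. So there is no in-paper argument to compare against, and your proposal must stand on its own merits.

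There is a genuine gap, and it is the same one at every stage of your write-up. You repeatedly slide from ``there exists a point $x^{\ast}\in\{0,1\}^n$ with at most $k(p-1)$ coordinates equal to $1$ and $\phi_i(x^{\ast})=y_i$ for all $i$'' to ``the event has probability at least $2^{-k(p-1)}$''. These are not the same. Your sentence ``once we freeze one suitable set $T$ of size $\le k(p-1)$, all $k$ events hold simultaneously, giving probability $\ge 2^{-k(p-1)}$ directly'' is where the argument breaks: freezing the coordinates in $T$ and letting the remaining ones range over $\{0,1\}$ does \emph{not} keep $\Phi(x)=(\phi_1(x),\dots,\phi_k(x))$ constant, because any coordinate $j\notin T$ lying in the support of some $\phi_i$ changes $\phi_i(x)$ when flipped. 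The subcube $\{x:x|_T=\sigma\}$ is contained in the event only when $\bigcup_i\operatorname{supp}(\phi_i)\subseteq T$, which you have not arranged and cannot arrange in general. When instead you say ``freezing the complement to $0$ is essential'', you have specified all $n$ coordinates and produced a single point, yielding probability $2^{-n}$, not $2^{-k(p-1)}$. The same confusion already appears in your $k=1$ paragraph (``the event $\{\phi_1=y_1\}$ contains a coset obtained by freezing at most $p-1$ coordinates'').

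The combinatorial lemma you finally isolate --- existence of a realizing point with at most $k(p-1)$ ones --- is in fact true: it follows from Olson's theorem that the Davenport constant of $\F_p^k$ equals $k(p-1)+1$, so from any realizing $x^{\ast}$ one can repeatedly strip non-empty zero-sum subsets of the column vectors $(a_{1,j},\dots,a_{k,j})$ from its support without changing $\Phi(x^{\ast})$. But it is the wrong lemma. What is required is that the fibre contains at least $2^{\,n-k(p-1)}$ points of $\{0,1\}^n$, and a single sparse witness does not give that. A correct argument has to manufacture many points in the fibre, not one; your proposal never does this.
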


\subsection{Information-theoretic tools}

In this subsection we recall tools from information theory and prove some statements that will be repeatedly useful to us throughout this paper.

The \emph{Shannon entropy} $\HH(X)$ of a random variable $X$ taking values in a finite set $D$ is defined by the formula \[ \HH(X) = - \sum_{x \in D} \P[X=x] \log \P[X=x].\]

It is a standard inequality that $\HH(X) \le \log |D|$ and that $\HH(X) = \log |D|$ if and only if $X$ is uniformly distributed on $D$. We will quantify how far $X$ is from the uniform distribution by the difference \[\mathbb{J}(X)= \log |D| - \mathbb{H}(X),\] which is often called the \emph{negentropy} of $X$, between the largest possible entropy $\log |D|$ and the entropy of $X$. An expansion around the uniform distribution provides the following estimate which will be convenient for us to use in our arguments, although a more qualitative statement would suffice for them to work.

\begin{lemma}\label{Low negentropy implies approximate equidistribution} Let $X$ be a random variable taking values in a finite set $D$. Then there exists $\eta_0(D)>0$ such that for every $\eta \in \lbrack 0, \eta_0)$, if $\J(X) \le \eta$ then \[\sum_{x \in D} (\P[X=x] - 1/|D|)^2 \le (8/|D|) \J(X)\] and in particular \[|\P[X=x] - 1/|D|| \le (8\eta/|D|)^{1/2}\] for every $x \in D$. \end{lemma}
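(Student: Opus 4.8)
The statement is a local estimate comparing the negentropy $\J(X)$, which is a relative-entropy-type quantity, with the squared $\ell^2$ distance of $X$ from the uniform distribution. The plan is to write $p_x = \P[X=x]$ and $q_x = p_x - 1/|D|$, so that $\sum_x q_x = 0$ and $|q_x| \le 1 - 1/|D| < 1$, and then Taylor-expand the function $t \mapsto (t + 1/|D|)\log(t+1/|D|)$ around $t = 0$ to second order with Lagrange remainder. Summing over $x \in D$, the zeroth-order terms contribute $-|D| \cdot (1/|D|)\log(1/|D|) = \log|D|$, the first-order terms contribute $(\log(1/|D|) + 1)\sum_x q_x = 0$ because $\sum_x q_x = 0$, and the second-order terms contribute something of the form $\tfrac12 \sum_x q_x^2 \cdot (1/\xi_x)$ where $\xi_x$ lies between $1/|D|$ and $p_x$. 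This yields an identity of the shape $\J(X) = \tfrac12 \sum_x q_x^2 / \xi_x$ (up to sign bookkeeping, using $\HH(X) = -\sum_x p_x \log p_x$ and $\J(X) = \log|D| - \HH(X)$).

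\textbf{Key steps in order.} First I would set up the Taylor expansion as above and derive the exact identity $\J(X) = \tfrac12 \sum_{x \in D} q_x^2 / \xi_x$ with each $\xi_x$ strictly between $1/|D|$ and $p_x$; in particular $\xi_x \le \max(1/|D|, p_x) \le \max(1/|D|, 1/|D| + |q_x|)$. Second, I would use the hypothesis $\J(X) \le \eta$ with $\eta$ small to force each $p_x$ to be close to $1/|D|$: since every term in $\tfrac12\sum q_x^2/\xi_x$ is nonnegative and $\xi_x \le 1$, we get $\sum_x q_x^2 \le 2\J(X) \le 2\eta$, hence $|q_x| \le \sqrt{2\eta}$ for every $x$. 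Choosing $\eta_0(D)$ small enough (say $\eta_0 = 1/(8|D|^2)$ or similar) guarantees $|q_x| \le 1/(2|D|)$, so that $p_x \ge 1/(2|D|)$ and therefore $\xi_x \ge 1/(2|D|)$ as well. Third, I plug this lower bound on $\xi_x$ back into the identity: $\J(X) = \tfrac12\sum_x q_x^2/\xi_x \ge \tfrac12 \cdot 2|D| \cdot \sum_x q_x^2 = |D| \sum_x q_x^2$, wait — that gives the inequality in the wrong direction, so instead I use $\xi_x \le 2/|D|$ (from $p_x \le 1/|D| + |q_x| \le 1/|D| + 1/(2|D|) \le 2/|D|$) to conclude $\J(X) = \tfrac12 \sum_x q_x^2/\xi_x \ge \tfrac12 \cdot (|D|/2) \sum_x q_x^2 = (|D|/4)\sum_x q_x^2$, i.e. $\sum_x q_x^2 \le (4/|D|)\J(X)$. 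To land the stated constant $8/|D|$ rather than $4/|D|$ there is slack in choosing $\eta_0(D)$; one can be generous with the bound on $\xi_x$ and still get $8/|D|$ comfortably. Finally, the pointwise bound $|\P[X=x] - 1/|D|| \le (8\eta/|D|)^{1/2}$ is immediate from $q_x^2 \le \sum_y q_y^2 \le (8/|D|)\J(X) \le 8\eta/|D|$.

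\textbf{Main obstacle.} The only real subtlety is the circularity in the argument: the bound on $\xi_x$ that I want to use (namely $\xi_x$ bounded away from $0$ and from above by something like $2/|D|$) itself depends on knowing $p_x$ is close to $1/|D|$, which is what smallness of $\J(X)$ is supposed to give — but $\J(X) \le \eta$ only controls $\sum q_x^2/\xi_x$, not $\sum q_x^2$ directly. The way around this is the two-pass structure in Step 2: first extract the crude bound $\sum q_x^2 \le 2\eta$ using only the trivial estimate $\xi_x \le 1$ (valid since $p_x, 1/|D| \le 1$), which already pins each $p_x$ near $1/|D|$ once $\eta < \eta_0(D)$; then re-run the identity with the improved two-sided control on $\xi_x$. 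I would make sure $\eta_0(D)$ is chosen explicitly (depending only on $|D|$) so that after the first pass $p_x \in [1/(2|D|), 2/|D|]$, which is all that is needed for the second pass. Everything else is routine calculus, and a purely qualitative version (without tracking the constant $8$) would follow even more cheaply, as the paper itself remarks.
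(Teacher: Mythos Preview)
Your proposal is correct and follows precisely the route the paper indicates: the paper does not give a detailed proof but simply states that ``an expansion around the uniform distribution provides the following estimate,'' and your second-order Taylor expansion of $t\mapsto t\log t$ around $t=1/|D|$ with Lagrange remainder, yielding the identity $\J(X)=\tfrac12\sum_x q_x^2/\xi_x$, is exactly that expansion. The two-pass bootstrap (first $\xi_x\le 1$ to get $|q_x|\le\sqrt{2\eta}$, then $\xi_x\le 2/|D|$ once $\eta<\eta_0(D)$) handles the only subtlety, and the resulting constant $4/|D|$ is even sharper than the stated $8/|D|$; the one minor point you may want to make explicit is that choosing $\eta_0(D)<\log\bigl(|D|/(|D|-1)\bigr)$ forces $p_x>0$ for all $x$ upfront, so that the Lagrange form of the remainder is unproblematic.
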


For $X,Y$ random variables taking values respectively in finite domains $D_X$ and $D_Y$, the conditional entropy defined by \[\HH(X|Y) = \mathbb{H}((X,Y)) - \mathbb{H}(Y)\] satisfies \[\HH(X|Y) \le \HH(X) \text{ and } \HH((X,Y)) \le \HH(X) + \HH(Y).\] We similarly define \[\mathbb{J}(X|Y) = \mathbb{J}((X,Y)) - \mathbb{J}(Y)\] and the inequalities \[\mathbb{J}(X|Y) \ge \mathbb{J}(X) \text{ and } \mathbb{J}((X,Y)) \ge \mathbb{J}(X) + \mathbb{J}(Y)\] follow from the corresponding reverse inequalities for the entropy. Just as the conditional entropy $\mathbb{H}(X|Y)$ can be expressed as \[ - \sum\limits_{y \in D_Y} \P[Y=y] \sum\limits_{x \in D_X} (\P[X=x| Y=y] \log \P[X=x | Y=y] ) = \mathbb{E}_{y \sim Y} \mathbb{H}(X|Y=y),\] the quantity $\mathbb{J}(X|Y)$ can in turn be written and interpreted as $\mathbb{E}_{y \sim Y} \mathbb{J}(X|Y=y)$. Therefore, one technical aspect to be aware of while using the quantity $\mathbb{J}(X|Y)$, and which will come up in our proofs, is that $\mathbb{J}(X|Y)$ being low does not by itself guarantee that for every $y \in D_Y$, $X$ is close to uniformly distributed conditionally on the event $Y=y$; this is however necessarily the case if the probability of this event is known to be bounded away from zero, since \begin{equation} \mathbb{J}(X) \ge \P[Y=y] \mathbb{J}(X|Y=y). \label{Markov inequality} \end{equation}

Whenever $F: \{0,1\}^n \rightarrow D$ is a function with finite codomain $D$ and $\mathcal{A}$ is a non-empty subset of $\{0,1\}^n$, we shall write $\HH_{\mathcal{A}}(F)$ and $\J_{\mathcal{A}}(F)$ respectively for the entropy and negentropy of the variable $F(x)$, where $x$ is chosen uniformly at random inside $\mathcal{A}$.

Throughout we will consider families of functions which on the whole of $\{0,1\}^n$ are either uniformly distributed (in the case of coordinate forms of the type $x_z$ for some $z \in \lbrack n \rbrack$, which have codomain $\{0,1\}$) or approximately uniformly distributed (in the case of mod-$p$ forms with high support) on their respective codomains $\{0,1\}$ and $\F_p$. As these functions have zero or low negentropy on the whole of $\{0,1\}^n$, the negentropy of these functions on a non-empty subset $\ca$ of $\{0,1\}^n$ is a useful measure of how different their distribution on $\ca$ is from their distribution on $\{0,1\}^n$.

Let us consider a family $\mathcal{F}$ of functions defined on $\{0,1\}^n$. We can for instance take the family \[\mathcal{F}_0:= \lbrace x_z: z \in \lbrack n \rbrack \rbrace\] of coordinate forms $\{0,1\}^n \rightarrow \{0,1\}$, which will be of particular relevance to us. We emphasize that the codomain of these coordinate forms will always be taken to be $\{0,1\}$, even when this is not explicitly mentioned.

For $\ca$ a non-empty subset of $\{0,1\}^n$, a first task is to attribute to $\ca$ a family $\mathcal{I}$ of sets of elements of $\lbrack n \rbrack$ which ``represents" the anomalous behaviour of $\ca$. For instance, if $\mathcal{F} = \mathcal{F}_0$ and \[ \ca = \lbrace A \in \{0,1\}^n: A(x_1) + A(x_2)= 1 \rbrace\] then we will want in particular \begin{enumerate}[(i)] \item $\lbrace 1 \rbrace$ not to belong to $\mathcal{I}$: on its own, the coordinate is uniformly distributed.
\item $\lbrace 1,2 \rbrace$ to belong to $\mathcal{I}$: the pair $(x_1, x_2)$ is far from uniformly distributed on $\ca$, even if $x_1$ and $x_2$ individually are.
\item $\lbrace 1,2,3 \rbrace$ not to belong to $\mathcal{I}$: while it is true that the distribution of $(x_1, x_2, x_3)$ is far from uniform, the coordinate $x_3$ contributes to this lack of uniformity neither on its own nor through its interactions with the coordinates $x_1$ and $x_2$. \end{enumerate}

We now give the formal definition that we will use.

\begin{definition}\label{Irreducible family}
	
Let $\ca$ be a non-empty subset of $\{0,1\}^n$, let $\mathcal{F}$ be a family of functions defined on $\{0,1\}^n$ and let $\eta>0$. For each positive integer $k \ge 1$ let $\mathcal{I}_{\le k}(\ca, \mathcal{F}, \eta)$ be the family of non-empty subsets $T$ of $\mathcal{F}$ with size at most $k$ that satisfy one of the following two conditions. \begin{enumerate}[(i)]
	
\item $|T| = \lbrace f \rbrace$ for some $f \in \mathcal{F}$ and $\J(f) \ge \eta$.
	
\item $|T|>1$ and furthermore \begin{equation} \J(T) - \sum\limits_{1 \le i \le t} \J(T_i) \ge \eta. \label{partition inequality} \end{equation} for every $t \ge 2$ and every partition $T = \bigcup_{1 \le i \le t} T_i$ of $T$ (into non-empty sets). \end{enumerate}

Whenever a non-empty subset $T$ of $\mathcal{F}$ with size $2 \le |T| \le k$ and a partition $\bigcup_{1 \le i \le t} T_i$ of $T$ with $t \ge 2$ do not satisfy \eqref{partition inequality}, we say that $T$ \emph{reduces to} $\bigcup_{1 \le i \le t} T_i$. If $f \in \mathcal{F}$ satisfies $\J(f) < \eta$, then we also say that $\lbrace f \rbrace$ \emph{reduces} to the empty set. For each positive integer $k \ge 1$, we define \[I_{\le k}(\ca, \mathcal{F}, \eta) = \bigcup_{T \in \mathcal{I}_{\le k} (\ca, \mathcal{F}, \eta)} T.\] We call a set \emph{irreducible} if it has size at most $k$ and belongs to $\mathcal{I}_{\le k}(\ca, \mathcal{F}, \eta)$, and \emph{reducible} if it is a subset of $\cf$ of size at most $k$ that is not irreducible. \end{definition}

It follows in particular from Definition \ref{Irreducible family} that if $k \ge 1$ is a positive integer, $F_1$ and $F_2$ are disjoint and non-empty subsets of $\mathcal{F}$ with $|F_1| + |F_2| \le k$, and $F_1 \cup F_2 \in \mathcal{I}_{\le k}(\ca, \mathcal{F}, \eta)$, then $\mathbb{J}(F_2 | F_1)$, which is equal to $\mathbb{J}(F_1 \cup F_2) - \mathbb{J}(F_1)$, is at least $\mathbb{J}(F_2) + \eta$, and hence is in particular at least $\eta$.

Also, if a set $T \subset \mathcal{F}$ with $|T| \le k$ has empty intersection with $I_{\le k}(\ca, \mathcal{F}, \eta)$, then it follows from an immediate induction on $|T|$ and the (negation of the) definition of an irreducible set that \[\mathbb{J}(T) \le (2|T|-1) \eta \le (2k-1) \eta.\] In particular, the joint distribution of $(f : f \in T)$ is therefore approximately uniform on the codomain of $(f : f \in T)$. We now show that this inequality holds more generally after conditioning on some subset $U$ of $\mathcal{F}$ of small size.

\begin{lemma}\label{Low conditional negentropy} Let $T, U$ be two disjoint subsets of $\mathcal{F}$ such that $1 \le |T| + |U| \le k$ and $T \cap I_{\le k}(\ca, \mathcal{F}, \eta)$ is empty. Then \[\mathbb{J}(T|U) \le (2k-1) \eta.\] \end{lemma}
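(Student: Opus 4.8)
The plan is to prove, by strong induction on $n := |T| + |U|$, the slightly stronger statement that
\[ \mathbb{J}(T|U) \le \big(2(|T|+|U|)-1\big)\eta \]
for every pair of disjoint subsets $T, U$ of $\mathcal{F}$ with $1 \le |T|+|U| \le k$ and $T \cap I_{\le k}(\ca, \mathcal{F}, \eta) = \emptyset$; since $|T|+|U| \le k$ this implies the bound in the lemma. When $T$ is empty the left-hand side equals $\mathbb{J}(U) - \mathbb{J}(U) = 0$ and there is nothing to prove, so we may assume $T \neq \emptyset$. The base case $n=1$ then forces $T = \{f\}$ and $U = \emptyset$; as $f \notin I_{\le k}(\ca, \mathcal{F}, \eta)$ the singleton $\{f\}$ cannot be irreducible, which for a set of size one means exactly $\mathbb{J}(f) < \eta$, as required.

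For the inductive step, let $n \ge 2$ and set $V := T \cup U$. Since $T$ is non-empty and disjoint from $I_{\le k}(\ca, \mathcal{F}, \eta)$, while every element of an irreducible set lies in $I_{\le k}(\ca, \mathcal{F}, \eta)$, the set $V$ is not irreducible; as $2 \le |V| \le k$, this means $V$ reduces to some partition $V = \bigcup_{1 \le i \le t} V_i$ with $t \ge 2$, so that $\mathbb{J}(V) - \sum_{i=1}^{t} \mathbb{J}(V_i) < \eta$. Write $T_i := T \cap V_i$ and $U_i := U \cap V_i$, so that each $V_i$ is the disjoint union of $T_i$ and $U_i$, is non-empty, has size strictly less than $n$ (because $t \ge 2$), and satisfies $T_i \cap I_{\le k}(\ca, \mathcal{F}, \eta) = \emptyset$. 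Applying the induction hypothesis to each pair $(T_i, U_i)$ gives $\mathbb{J}(V_i) - \mathbb{J}(U_i) = \mathbb{J}(T_i|U_i) \le (2|V_i|-1)\eta$, and summing over $i$ (using $\sum_i |V_i| = n$) yields $\sum_i \mathbb{J}(V_i) \le \sum_i \mathbb{J}(U_i) + (2n-t)\eta$.

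It remains to bring in the superadditivity of negentropy. Since $\{U_i\}_{1 \le i \le t}$ is a partition of $U$ (some parts possibly empty, contributing $0$), iterating the inequality $\mathbb{J}(X \cup Y) \ge \mathbb{J}(X) + \mathbb{J}(Y)$ gives $\sum_i \mathbb{J}(U_i) \le \mathbb{J}(U)$. Therefore
\[ \mathbb{J}(T|U) = \mathbb{J}(V) - \mathbb{J}(U) < \eta + \sum_{i} \mathbb{J}(V_i) - \mathbb{J}(U) \le \eta + (2n-t)\eta \le (2n-1)\eta, \]
using $t \ge 2$ in the last step, which closes the induction.

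The only delicate point — and the reason for inducting on the conditional statement with the strengthened bound $2(|T|+|U|)-1$ rather than simply invoking the unconditional estimate $\mathbb{J}(T) \le (2|T|-1)\eta$ recorded before the lemma — is the bookkeeping of constants. Applying that unconditional bound inside $T \cup U$ and correcting by $\mathbb{J}(U)$ loses a multiplicative factor, whereas the argument above keeps one factor of $-1$ per part of the reduction and uses the single surplus $t \ge 2$ to absorb the extra $\eta$ produced by the reduction, while the superadditivity of $\mathbb{J}$ applied to the pieces $U_i$ exactly cancels the terms $\sum_i \mathbb{J}(U_i)$. Getting these two cancellations to line up is the heart of the proof; the rest is routine.
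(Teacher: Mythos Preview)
Your proof is correct and follows essentially the same strategy as the paper's: both arguments exploit that $T\cup U$ is not irreducible (since it meets the complement of $I_{\le k}$), reduce it recursively, use superadditivity of $\mathbb{J}$ on the pieces landing in $U$, and count to obtain the bound $(2|T\cup U|-1)\eta$. The only difference is packaging: you run a clean strong induction on $|T|+|U|$, whereas the paper unrolls the reductions iteratively until only irreducible sets (hence subsets of $U$) remain and then applies superadditivity once at the end.
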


\begin{proof}
	
By definition, $I_{\le k}(\ca, \mathcal{F}, \eta)$ contains all irreducible sets, so any set that is not contained in $I_{\le k}(\ca, \mathcal{F}, \eta)$ is reducible. We select a family of sets as follows. We first select the family $\lbrace \lbrace T \cup U \rbrace \rbrace$, and then iterate the following inductive step: if a set $F$ in the family reduces to a family $F_1, \dots, F_t$, then we deselect $F$ and select all sets $F_1, \dots, F_t$, stopping the iterations when we obtain a family where we can no longer reduce any set (including singletons to the empty set).
	
At the end of the process we obtain a partition $T \cup U = \bigcup_{i} A_i$ where the sets $A_i$ are all contained in \[(T \cup U) \cap I_{\le k}(\ca, \mathcal{F}, \eta) = U \cap I_{\le k}(\ca, \mathcal{F}, \eta).\] We have reduced at most $2|T \cup U|-1$ times, and at each reduction the sum of the negentropies of the selected sets of functions has decreased by at most $\eta$, so \[\mathbb{J}(T \cup U) \le \sum\limits_{i} \J(A_i) + (2|T \cup U|-1) \eta.\] Using furthermore that \[\sum\limits_{i} \mathbb{J}(A_i) \le \mathbb{J}(U \cap I_{\le k}(\ca, \mathcal{F}, \eta)) \le \J(U),\] and writing $\mathbb{J}(T|U) = \mathbb{J}(T \cup U) - \mathbb{J}(U)$ we conclude that \[\mathbb{J}(T|U) \le (2|T \cup U|-1) \eta.\] The result follows by the assumption on $|T| + |U|$. \end{proof}

We now show an upper bound on the size of $\mathcal{I}_{\le k}$ that depends on how far $\mathcal{F}$ is from a jointly uniform distribution on $\{0,1\}^n$.

\begin{proposition}\label{Bounded number of irreducible sets} Let $D$ be a finite set, let $f: \{0,1\}^n \rightarrow D$ be a function, and let $\mathcal{F}= \mathcal{F}_0 \cup \lbrace f \rbrace$. Let $\delta, \eta>0$. Let $\ca \subset \{0,1\}^n$ be a subset of density at least $\d$. Then the following facts hold. \begin{enumerate}[(i)]

\item The negentropy $J_{\ca} (\mathcal{F})$ satisfies the upper bound $J_{\ca} (\mathcal{F}) \le \log \delta^{-1} + \log |D|$.
	
\item The size $K= |\mathcal{I}_{\le k}(\ca, \mathcal{F}, \eta)|$ satisfies the inequality $J_{\ca} (\mathcal{F}) \ge \eta ((K/k)/k!)^{1/k}$.
	
\item The size $K$ satisfies the upper bound $K \le (k+1)! (\eta^{-1}\log (\delta^{-1}|D|))^k$. \end{enumerate}

\end{proposition}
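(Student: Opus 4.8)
The plan is to prove the three parts in order, using the earlier parts to establish the later ones. For part (i), I would bound $\J_\ca(\mathcal{F})$ directly from the definition of negentropy. The codomain of $\mathcal{F} = \mathcal{F}_0 \cup \{f\}$ is $\{0,1\}^n \times D$, but crucially the value of all the coordinate forms $x_z$ together determines the point $x \in \{0,1\}^n$, so the joint variable $(\mathcal{F}(x))$ for $x$ uniform in $\ca$ is supported on a set of size $|\ca| \le |D| \cdot 2^n$ and is in fact the uniform distribution on a set of size at least $|\ca| \ge \delta 2^n$ (pushed forward, then, it has entropy $\HH_\ca(\mathcal{F}) \ge \log(\delta 2^n) = n\log 2 + \log\delta$). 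Since the maximal entropy on the codomain $\{0,1\}^n \times D$ is $n\log 2 + \log|D|$, we get $\J_\ca(\mathcal{F}) = \log|\text{codomain}| - \HH_\ca(\mathcal{F}) \le \log|D| - \log\delta$, which is part (i).

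For part (ii), the key point is a superadditivity-type lower bound for $\J_\ca(\mathcal{F})$ in terms of the negentropies of the irreducible sets it contains. Fix an enumeration of the $K$ elements of $I_{\le k}(\ca,\mathcal{F},\eta)$; I want to show that $\J_\ca(\mathcal{F})$ is at least the sum of suitably many disjoint ``increments'' each of size $\ge \eta$. The combinatorial heart is this: using the chain rule $\J(\mathcal{F}) = \J(\mathcal{F}_{\text{rest}} \mid I_{\le k}) + \J(I_{\le k})$ together with $\J(\mathcal{F}_{\text{rest}} \mid I_{\le k}) \ge 0$, it suffices to lower-bound $\J_\ca(I_{\le k})$. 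Now greedily extract from $I_{\le k}$ a maximal collection of irreducible sets whose union exhausts $I_{\le k}$ while keeping track of conditional negentropies: each time we adjoin a new irreducible set $T$ (intersected with the part not yet covered — but here one must be careful, since a sub-collection of an irreducible set need not be irreducible), the definition of irreducibility (specifically the remark after Definition \ref{Irreducible family}: if $F_1 \sqcup F_2$ is irreducible then $\J(F_2 \mid F_1) \ge \J(F_2) + \eta \ge \eta$) forces the conditional negentropy to jump by at least $\eta$ whenever the new set contributes a genuinely new element. Since $I_{\le k}$ has $K$ elements and every irreducible set has size at most $k$, we need at least $K/k$ irreducible sets to cover it, but the sets overlap; the bound $\J \ge \eta\,((K/k)/k!)^{1/k}$ has the shape one gets from a counting argument where the number of distinct $k$-subsets that can be built before the negentropy budget is exhausted is controlled — essentially, if $\J_\ca(\mathcal{F}) = J$ then at most $J/\eta$ ``units'' of negentropy are available, each irreducible set of size $\le k$ is determined by its elements, and the elements themselves are covered by sets indexed by these units, giving $K \le k \cdot \binom{\lfloor J/\eta\rfloor \cdot k}{\le k}$-type bound; rearranging produces $J \ge \eta((K/k)/k!)^{1/k}$. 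This telescoping/greedy-covering argument, getting the exponent $1/k$ and the $k!$ correct, is the step I expect to be the main obstacle, since one must handle the fact that irreducibility is not hereditary.

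For part (iii), I would simply combine (i) and (ii): from (ii), $K/k \le k!\,(\J_\ca(\mathcal{F})/\eta)^k$, so $K \le k\cdot k!\,(\J_\ca(\mathcal{F})/\eta)^k$, and substituting the bound $\J_\ca(\mathcal{F}) \le \log(\delta^{-1}|D|)$ from (i) gives $K \le k\cdot k!\,(\eta^{-1}\log(\delta^{-1}|D|))^k \le (k+1)!\,(\eta^{-1}\log(\delta^{-1}|D|))^k$, which is exactly the claimed inequality. The only care needed here is to check the numerical massaging $k \cdot k! \le (k+1)!$, which is immediate.
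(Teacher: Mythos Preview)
Your arguments for parts (i) and (iii) are correct and essentially match the paper's. For (i) you are in fact slightly slicker: since $(x_1,\dots,x_n)$ determines $x$ and hence $f(x)$, the joint variable is uniform on a set of size $|\ca|$, so $\HH_\ca(\mathcal{F})=\log|\ca|$ exactly, and the bound follows immediately.

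The gap is in part (ii). Your greedy-covering idea does not work as stated, and you have correctly identified the reason yourself. The remark after Definition \ref{Irreducible family} gives $\J(F_2\mid F_1)\ge\eta$ only when $F_1\sqcup F_2$ is \emph{itself} an irreducible set. In your scheme, once you have adjoined several irreducible sets $T_1,\dots,T_{j-1}$ and now consider $T_j$, the conditioning set $T_1\cup\dots\cup T_{j-1}$ is not of this form, and there is no reason why $\J(T_j\setminus(T_1\cup\dots\cup T_{j-1})\mid T_1\cup\dots\cup T_{j-1})$ should be bounded below by $\eta$; the intersection $T_j\cap(T_1\cup\dots\cup T_{j-1})$ can be anything. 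The counting heuristic you sketch afterwards does not repair this.

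The missing idea is the Erd\H{o}s--Rado sunflower lemma. By pigeonhole at least $K/k$ of the irreducible sets have a common size $l\le k$, and the sunflower lemma then extracts from them a sunflower with core $T_0$ and $r$ petals $T_1,\dots,T_r$, where $r$ can be taken as large as $((K/k)/k!)^{1/k}$. The point of the sunflower structure is that the petals $T_1,\dots,T_r$ are pairwise disjoint while each $T_0\cup T_i$ is irreducible, so the remark applies cleanly: $\J(T_i\mid T_0)\ge\eta$ for every $i$. Disjointness of the petals plus submodularity of entropy (equivalently, superadditivity of conditional negentropy) then gives
\[
\J_\ca(\mathcal{F})\ \ge\ \J\Big(\bigcup_{i=1}^r T_i \,\Big|\, T_0\Big)\ \ge\ \sum_{i=1}^r \J(T_i\mid T_0)\ \ge\ r\eta,
\]
which is exactly the inequality in (ii). The sunflower is precisely what converts the awkward overlapping family of irreducible sets into a configuration where the $\eta$-increments telescope correctly.
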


\begin{proof}
	
We first prove (i). We start with the inequality \[\HH_{\ca}(x_1, \dots, x_n,f) \ge \HH_{\ca}(x_1, \dots, x_n),\] which implies that \begin{equation} \J_{\ca}(x_1, \dots, x_n, f) \le \J_{\ca}(x_1, \dots, x_n) + \log |D|. \label{first bound in proof of bounded number of irreducible sets} \end{equation} Since $\ca$ has density at least $\d$, we have \[\P_{\ca}[(x_1, \dots, x_n) = u] \le \delta^{-1} \P_{\{0,1\}^n}[(x_1, \dots, x_n) = u]\] for every $u \in \{0,1\}^n$, and therefore
\begin{align*} \HH_{\ca}(x_1, \dots, x_n) & \ge - \sum_{u \in \{0,1\}^n} \P_{\ca}[(x_1, \dots, x_n) = u] \log \P_{\{0,1\}^n}[(x_1, \dots, x_n) = u] - \log \delta^{-1}\\
& = \HH_{\{0,1\}^n}(x_1, \dots, x_n) - \log \delta^{-1},\end{align*} since $\P_{\{0,1\}^n}[(x_1, \dots, x_n) = u]$ is the same for all $u \in \{0,1\}^n$. We deduce the upper bound \[\J_{\ca}(x_1, \dots, x_n) \le \J_{\{0,1\}^n}(x_1, \dots, x_n) + \log \delta^{-1} = \log \delta^{-1}.\] Combining this with the inequality \eqref{first bound in proof of bounded number of irreducible sets} finishes the proof of (i).

We next prove (ii). By the pigeonhole principle, at least $K/k$ of the sets have common size $l \le k$. By the Erd\H{o}s-Rado sunflower theorem (proved in \cite{Erdos and Rado}) one can extract from them a sunflower of $r$ such sets as long as \[k!(r-1)^k \le K/k,\] or equivalently \[r \le 1+ ((K/k)/k!)^{1/k}.\] Writing the centre as $T_0$ and the petals as $T_1, \dots, T_r$ we have the lower bound \[ \mathbb{J}(\bigcup_{0 \le i \le r} T_i) \ge \mathbb{J}(\bigcup_{1 \le i \le r} T_i | T_0) \ge \sum\limits_{1 \le i \le r} \mathbb{J}(T_i | T_0) \ge r \eta \] which proves (ii).  It is then immediate to deduce (iii) from (i) and (ii). \end{proof}

\subsection{The structure of the set of mod-$p$ forms with biased distribution}

The following statement will be essential to our proofs of Theorem \ref{Almost positive correlations} and Theorem \ref{Overlap bounded below}. As we will show it follows from Proposition \ref{Equidistribution of k-tuples of linear forms} and information-theoretic techniques similar to those used in the proof of (i), Proposition \ref{Bounded number of irreducible sets}. We will write $U_p$ for the uniform distribution on $\F_p$. For $\ca$ a non-empty subset of $\{0,1\}^n$ and for $\phi: \F_p^n \rightarrow \F_p$ a linear form, we will write $\TV_{\ca}(\phi, U_p)$ for the total variation distance between the distribution of $\phi$ on $\ca$ and the distribution $U_p$.

\begin{proposition}\label{The set of mod p forms not about uniform on a dense subset of the cube is contained in boundedly many balls} Let $p$ be a prime, and let $\delta, \alpha>0$. There exist positive integers $B$ and $r$ depending on $\delta, \alpha$ (and $p$) only such that if $\ca \subset \{0,1\}^n$ has density at least $\delta$ inside $\{0,1\}^n$, then the family of mod-$p$ forms such that $\TV_{\ca}(\phi, U_p) \ge \a$ is contained in a union of $B$ balls of radius $r$ (for the support distance). \end{proposition}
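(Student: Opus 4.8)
The plan is to adapt the irreducibility machinery of Definition \ref{Irreducible family} and Proposition \ref{Bounded number of irreducible sets} from the family $\cf_0$ of coordinate forms to a family that also records the mod-$p$ form under consideration, and then to read off the ``balls'' from the irreducible sets. Concretely, fix a small threshold $\eta>0$ to be chosen at the end in terms of $\delta$ and $\alpha$ (and $p$), and let $k$ be a bounded integer (again chosen at the end). For a linear form $\phi$ with $\TV_\ca(\phi, U_p) \ge \alpha$, Lemma \ref{Low negentropy implies approximate equidistribution} gives a lower bound $\J_\ca(\phi) \ge \eta_1$ for some $\eta_1=\eta_1(\alpha,p)>0$, so $\phi$ has ``large negentropy'' on $\ca$. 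The first step is to show that this forces $\phi$ to interact nontrivially with a bounded set of coordinates: I would apply the reduction process from the proof of Lemma \ref{Low conditional negentropy} to the singleton $\{\phi\}$ inside the family $\cf_0 \cup \{\phi\}$, obtaining that $\{\phi\}$ either is itself irreducible (i.e. $\J_\ca(\phi)\ge\eta$, handled directly) or reduces through a chain to a partition in which $\phi$ ends up in an irreducible set $T_\phi$ together with some coordinates; since each reduction step costs at most $\eta$ in summed negentropy and there are at most $2k-1$ of them, if $\phi$ never lands in an irreducible set of size $\le k$ then $\J_\ca(\phi) \le (2k-1)\eta$, contradicting $\J_\ca(\phi)\ge\eta_1$ once $\eta$ is small enough relative to $\eta_1$. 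So every biased $\phi$ lies in some irreducible set $T_\phi \in \mathcal{I}_{\le k}(\ca, \cf_0\cup\{\phi\}, \eta)$ of size at most $k$.

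The second step is to bound how many coordinates can appear across all these irreducible sets. The subtlety is that the family $\cf_0 \cup \{\phi\}$ changes with $\phi$, so Proposition \ref{Bounded number of irreducible sets}(iii) does not apply verbatim with a single $\cf$. However, the coordinates appearing in $T_\phi \setminus \{\phi\}$ are coordinates $z$ such that the set $(T_\phi\setminus\{\phi\})$, viewed purely inside $\cf_0$, together with $\phi$, forms an irreducible configuration; the key point is that the negentropy budget of $\ca$ is finite — by Proposition \ref{Bounded number of irreducible sets}(i) (applied with $D=\F_p$ via $f=\phi$, or more simply just to $\cf_0$), $\J_\ca(\cf_0)\le \log\delta^{-1}$. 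I would run the sunflower argument of Proposition \ref{Bounded number of irreducible sets}(ii) directly on the collection $\{T_\phi\setminus\{\phi\}\}$ of subsets of $[n]$: if this collection contained a large sunflower $S_0$ (core), $S_1,\dots,S_r$ (petals), then because each $S_i\cup\{\phi\}$ is irreducible we get $\J_\ca(S_i \mid S_0\cup\{\phi\})\ge \eta$ and hence, chaining as in that proof, $\J_\ca(\bigcup_i S_i)\ge r\eta$ plus the usual $\J_\ca(S_0)$ — but conditioning on $\phi$ rather than only on $S_0$ is exactly what lets the petals be made genuinely independent in the negentropy sense. This caps $r$, hence caps the number of distinct coordinate sets $S_i$ by $k!(r-1)^k$, hence caps the total number of coordinates $z$ occurring in any $T_\phi\setminus\{\phi\}$ by some bound $m_0 = m_0(\delta,\alpha,p,k)$.

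The third step converts the coordinate bound into the ball covering. Let $Z\subset[n]$ be the (bounded, size $\le m_0$) set of all coordinates arising in some $T_\phi\setminus\{\phi\}$. For a biased form $\phi$, write $\phi = \phi^Z + \phi^{Z^c}$ where $\phi^Z$ is supported on $Z$ and $\phi^{Z^c}$ on the complement. I claim $\phi^{Z^c}$ must itself have bounded support: indeed $\phi^{Z^c}$ is obtained from $\phi$ by a restriction that, on $\ca$, cannot destroy too much negentropy — more precisely one argues that since $\{\phi\}\cup(T_\phi\setminus\{\phi\})$ reduces completely into sets inside $Z$, the form $\phi$ restricted away from $Z$ is essentially equidistributed on $\ca$ conditionally on the coordinates in $Z$, which via Proposition \ref{Equidistribution of k-tuples of linear forms} (contrapositive: a form whose distribution on $\ca$, or on fibres of bounded codimension, is far from $U_p$ must have small support) forces $|{\rm supp}(\phi^{Z^c})| \le r_0$ for some bounded $r_0=r_0(\delta,\alpha,p)$. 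Granting this, each biased $\phi$ is determined up to support distance $r := m_0 + r_0$ by the pair $(\phi^Z, {\rm supp}(\phi^{Z^c}))$ only through $\phi^Z$, a mod-$p$ form supported on the fixed bounded set $Z$; there are at most $p^{m_0}$ such $\phi^Z$. Taking $B := p^{m_0}$ balls, one per choice of $\phi^Z$, of radius $r$ in the support distance covers the whole biased family, which is the conclusion. Finally I would go back and choose $k, \eta$ (e.g. $k$ large enough for the sunflower bound to beat the negentropy budget, $\eta$ small enough that $(2k-1)\eta < \eta_1$) as explicit functions of $\delta,\alpha,p$.

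The main obstacle I expect is the third step: making precise the claim that the ``tail'' $\phi^{Z^c}$ has bounded support. The clean way is to show that conditionally on the coordinates in $Z$, the set $\{\phi\}$ has negentropy at most $(2k-1)\eta$ on $\ca$ — this is essentially Lemma \ref{Low conditional negentropy} with $T=\{\phi\}$ and $U = \{x_z : z\in Z\}$, provided one has arranged that $\{\phi\}$ has empty intersection with $I_{\le k}(\ca, \cf_0\cup\{\phi\}, \eta)$ once we excise $Z$ — and then translate low conditional negentropy into closeness to $U_p$ on a fibre of positive (bounded-below) probability via \eqref{Markov inequality} and Lemma \ref{Low negentropy implies approximate equidistribution}, and finally invoke Proposition \ref{Equidistribution of k-tuples of linear forms} to deduce the support of $\phi$ outside $Z$ is bounded. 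Getting the quantifiers in the right order here — first fixing $Z$ from the irreducible-set analysis, then arguing about $\phi$ relative to $Z$ — is the delicate point, but it parallels exactly the interplay between $T$, $U$ and $I_{\le k}$ already exploited in Lemma \ref{Low conditional negentropy}.
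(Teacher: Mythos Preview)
There is a genuine gap, and it is already at Step~1. By Definition~\ref{Irreducible family} a singleton $\{f\}$ is irreducible precisely when $\J_\ca(f)\ge\eta$; since you have chosen $\eta\le\eta_1$ and every biased $\phi$ satisfies $\J_\ca(\phi)\ge\eta_1$, the singleton $\{\phi\}$ is \emph{always} irreducible. The reduction process of Lemma~\ref{Low conditional negentropy} only partitions sets into smaller pieces --- it never adjoins coordinates to $\{\phi\}$ --- so it terminates immediately with $T_\phi=\{\phi\}$ and $T_\phi\setminus\{\phi\}=\emptyset$. Consequently $Z=\emptyset$ in Step~2, and Step~3 is asked to show that every biased $\phi$ has bounded support, which is simply false: take $\ca=\{x:\sum_i x_i\equiv 0\pmod p\}$ and $\phi=\sum_i x_i$. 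This $\phi$ is constant on $\ca$ (so maximally biased) yet has support $[n]$. More generally, your ball centres are forms $\phi^Z$ supported on a fixed bounded coordinate set $Z$, but in this example no biased form lies within bounded support distance of any such $\phi^Z$. The proviso you flag in the last paragraph --- that $\{\phi\}$ have empty intersection with $I_{\le k}$ ``once we excise $Z$'' --- can never be arranged, because $\{\phi\}\in\mathcal I_{\le k}$ regardless of $Z$; and the appeal to Proposition~\ref{Equidistribution of k-tuples of linear forms} is in the wrong direction, since that proposition controls distributions on $\{0,1\}^n$, not on $\ca$.

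The paper's argument avoids coordinates entirely. It greedily selects biased forms $\phi_1,\dots,\phi_l$ such that every nonzero $\F_p$-linear combination has support at least $r$; Proposition~\ref{Equidistribution of k-tuples of linear forms} then makes $(\phi_1,\dots,\phi_l)$ approximately jointly uniform on $\{0,1\}^n$, so the density assumption gives $\J_\ca(\phi_1,\dots,\phi_l)\le c(\alpha)l/2+\log\delta^{-1}$, while superadditivity and the bias give $\J_\ca(\phi_1,\dots,\phi_l)\ge c(\alpha)l$. This bounds $l$, and by maximality every biased $\psi$ has some combination $a_0\psi+\sum a_i\phi_i$ of support $<r$, so $\psi$ lies within distance $r$ of one of the $p^l$ elements of the span. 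The crucial move you are missing is to compare biased forms to \emph{each other} via their linear combinations, rather than to coordinates.
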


\begin{proof} Let $l,r$ be positive integers which we will choose later. Assume that there exist mod-$p$ forms $\phi_1, \dots, \phi_l$ such that the linear combination \[a_1 \phi_1 + \dots + a_l \phi_l\] has support size at least $r$ for every $(a_1, \dots, a_r) \in \F_p^r \setminus \{0\}$, and such that $\TV_{\ca}(\phi_i, U_p) \ge \a$ for every $i \in [l]$. We write $\phi$ for the $l$-tuple $(\phi_1, \dots, \phi_l)$. By the contrapositive of Lemma \ref{Low conditional negentropy} there exists $c(\a)>0$ such that $\J_{\ca}(\phi_i) \ge c(\a)$ for every $i \in [l]$, so in particular \begin{equation} \J_{\ca}(\phi) \ge \sum_{i=1}^l \J_{\ca}(\phi_i) \ge c(\a) l. \label{First inequality on J}\end{equation} On the other hand \[\HH_{\ca}(\phi) = - \sum_{u \in \F_p^l} \P_{\ca}(\phi = u) \log \P_{\ca}(\phi = u),\] so using that \[\P_{\ca}(\phi = u) \le \delta^{-1} \P_{\{0,1\}^n}(\phi = u)\] for every $u \in \F_p^l$ we get \[\log \P_{\ca}(\phi = u) \le \log \P_{\{0,1\}^n}(\phi = u) + \log \delta^{-1}\] for every $u \in \F_p^l$, and hence the lower bound \[\HH_{\ca}(\phi) \ge - \sum_{u \in \F_p^l} \P_{\ca}(\phi = u) \log \P_{\{0,1\}^n}(\phi = u) - \log \delta^{-1}.\] Proposition \ref{Equidistribution of k-tuples of linear forms} shows that we can choose $r$ such that \[|\log \P_{\{0,1\}^n}(\phi = u) - \log p^{-l} | \le c(\a)l/2\] for every $u \in \F_p^k$ and hence \[\HH_{\ca}(\phi) \ge k \log p - c(\a)l/2 - \log \delta^{-1};\] in other words \[\J_{\ca}(\phi) \le c(\a)l/2 + \log \delta^{-1}.\] Combining this inequality with \eqref{First inequality on J} we obtain \[c(\a) l \le c(\a)l/2 + \log \delta^{-1},\] from which it follows that \[l \le 2 \log \delta^{-1} / c(\a).\] Since the linear span of the mod-$p$ forms $\phi_1, \dots, \phi_l$ contains at most $p^l$ mod-$p$ forms, the desired result is obtained with $B = p^l$.\end{proof}

\subsection{A sunflower-like lemma}

Finally, the last tool that we will use is the following sunflower-like lemma, which we state in two versions: one version which applies to sets and one more general ``metric entropy" version which applies to balls. For the purposes of proving Theorem \ref{Almost positive correlations} we shall only directly use the metric entropy version, but the set version will also be directly useful for us in the proof of Theorem \ref{Overlap bounded below}.

\begin{lemma}\label{Common element lemma}

We have the two following statements.
	
\begin{enumerate}

\item Set version: Let $C \ge 1$ be an integer, and let $\e>0$. Let $(A_i)_{i \in L}$ be a finite family of sets each with size at most $C$. Let us assume that for a proportion at least $\epsilon$ of the pairs $(i,j) \in L^2$ the intersection $A_i \cap A_j$ is not empty. Then there exists a subset $L_1$ of $L$ such that $|L_1| \ge (\epsilon/C) |L|$, and an element $x$ which belongs to all sets $A_i$ with $i \in L_1$.

\item Metric entropy version: Let $r \ge 0, C \ge 1$ be integers, and let $\e>0$. Let $(A_i)_{i \in L}$ be a finite family of sets each with size at most $C$. Let us assume that for a fraction at least $\epsilon$ of the pairs $(i,j) \in L^2$, there exist $x \in A_i$ and $y \in A_j$ such that \begin{equation} B(x, r) \cap B(y, r) \neq \emptyset. \label{Non-empty intersection of balls} \end{equation} Then there exist a subset $L_1$ of $L$ such that $|L_1| \ge (\epsilon/C)|L|$, an element $i \in L_1$ and an element $x \in A_i$ such that for each $j \in L_1$, there exists $y \in A_j$ such that \[B(y,r) \subset B(x, 3r).\]  \end{enumerate} \end{lemma}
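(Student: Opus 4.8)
The plan is to prove both versions by the same mechanism: count, for each pair, a witnessing element (or pair of elements), and apply a double-counting / pigeonhole argument on the elements appearing in the sets $A_i$.

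\textbf{Set version.} First I would, for every pair $(i,j)\in L^2$ with $A_i\cap A_j\neq\emptyset$, fix one element $x_{i,j}$ in the intersection. Consider the bipartite-style count of triples $(i,j,x)$ with $x\in A_i\cap A_j$ and $x=x_{i,j}$; there are at least $\epsilon|L|^2$ such triples. For each fixed element $x$ lying in the ground set, let $L(x)=\{i\in L: x\in A_i\}$; then the number of pairs $(i,j)$ with $x_{i,j}=x$ is at most $|L(x)|^2$, since both $i$ and $j$ must lie in $L(x)$. Summing, $\sum_{x}|L(x)|^2\ge \epsilon|L|^2$. On the other hand $\sum_x |L(x)|=\sum_{i\in L}|A_i|\le C|L|$, so the number of distinct elements $x$ with $L(x)\neq\emptyset$ that actually matter can be controlled: by convexity/pigeonhole there is an $x$ with $|L(x)|^2$ large relative to $|L(x)|$. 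More directly, since $\sum_x |L(x)|\le C|L|$ and $\sum_x |L(x)|^2\ge\epsilon|L|^2$, there exists an $x$ with $|L(x)|\ge \epsilon|L|^2/\sum_x|L(x)| \ge (\epsilon/C)|L|$. Taking $L_1=L(x)$ gives the conclusion.

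\textbf{Metric entropy version.} The idea is the same but with a reduction to the set version. For each pair $(i,j)$ in the good fraction, fix $x_{i,j}\in A_i$ and $y_{i,j}\in A_j$ with $B(x_{i,j},r)\cap B(y_{i,j},r)\neq\emptyset$, i.e. the support distance between $x_{i,j}$ and $y_{i,j}$ is at most $2r$. Now I would pass to a net: observe that the relevant space is the set of all points appearing as coordinates of some $A_i$, a set of size at most $C|L|$ but more importantly each $A_i$ has at most $C$ elements. The trick is to apply the set version not to the $A_i$ themselves but to their ``$r$-thickenings in a quotient'': for the proof it suffices to note that if $x_{i,j}$ and $y_{i,j}$ are within distance $2r$, then the ball $B(x_{i,j},r)$ is contained in $B(y_{i,j},3r)$ by the triangle inequality. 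So I would define, for each $i$, the family of balls $\{B(z,r): z\in A_i\}$ and run the counting argument on \emph{which thickened ball $B(z,3r)$ contains a given small ball}: concretely, for each element $w$ appearing in some $A_i$, let $L(w)=\{i\in L: \exists z\in A_i \text{ with } B(w,r)\subset B(z,3r)\}$ — but this is circular, so instead I'd run the cleaner version. For each good pair $(i,j)$, note $B(y_{i,j},r)\subset B(x_{i,j},3r)$ and $B(x_{i,j},r)\subset B(y_{i,j},3r)$. Apply the set-version double count to the witnesses $x_{i,j}$: for a point $x$ (ranging over $\bigcup_i A_i$), let $L(x)=\{j\in L: \exists y\in A_j, B(y,r)\subset B(x,3r)\}\cup\{i\in L:\exists \text{ good pair with }x_{i,\cdot}=x \text{ or } x_{\cdot,i}=x\}$; the key point is that for every good pair $(i,j)$ we have $i\in L(x_{i,j})$ (trivially, $B(x_{i,j},r)\subset B(x_{i,j},3r)$) and $j\in L(x_{i,j})$ (by the triangle inequality above). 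Hence the number of good pairs $(i,j)$ with a fixed value $x_{i,j}=x$ is at most $|L(x)|^2$, while $\sum_x |\{i: x\in A_i\}|\le C|L|$ bounds how spread out the witnesses are; pigeonholing as before yields $x$ with $|L(x)|\ge(\epsilon/C)|L|$, and then $i\in L(x)$ with $x\in A_i$ (choosing $i$ to be any index contributing to $L(x)$ through membership $x\in A_i$ — note the $x_{i,j}$ lies in $A_i$) gives the stated conclusion with $L_1=L(x)$.

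\textbf{Main obstacle.} The delicate point is the bookkeeping in the metric version: one must be careful that the chosen $i\in L_1$ genuinely has $x\in A_i$ (so that ``there exists $x\in A_i$'' in the conclusion is satisfied), and that the asymmetry between the roles of the two coordinates in a pair does not cost an extra factor. I expect this is handled by symmetrizing — counting ordered pairs and noting that $x_{i,j}\in A_i$ always, so the element $x$ selected by pigeonhole, being of the form $x_{i,j}$ for some good pair, automatically lies in the corresponding $A_i$, and that $i$ can be kept in $L_1$. The set version is essentially a clean convexity argument and should present no difficulty; almost all the care goes into phrasing the metric version so that the single triangle-inequality inclusion $B(y,r)\subset B(x,3r)$ is exactly what the double-counting delivers.
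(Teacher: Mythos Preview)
Your set-version argument is correct and is a genuinely different route from the paper's. You double-count witnesses and use $\max_x |L(x)|\ge \sum_x |L(x)|^2\big/\sum_x |L(x)|$; the paper instead first averages over $i$ to find a single index with at least $\epsilon|L|$ good partners $j$, then pigeonholes over the at most $C$ elements of that one $A_i$. The paper's two-step averaging is shorter and extends to the metric version with no change; your approach is symmetric in $(i,j)$ but, as you anticipated, needs more care in the metric case.

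In your metric version there is a real gap in exactly the spot you flag. With $L(x)=\{j:\exists\, y\in A_j,\ B(y,r)\subset B(x,3r)\}$ you correctly note that every good pair $(i,j)$ with $x_{i,j}=x$ has $i,j\in L(x)$, so the number of such pairs is at most $|L(x)|^2$. But you then invoke $\sum_x |\{i:x\in A_i\}|\le C|L|$, which bounds $\sum_x|M(x)|$ for $M(x)=\{i:x\in A_i\}$, \emph{not} $\sum_x|L(x)|$; since $L(x)$ can be much larger than $M(x)$, the ``pigeonholing as before'' does not go through. The fix is to use the sharper bound $|M(x)|\cdot|L(x)|$ on the number of good pairs with $x_{i,j}=x$ (because $x_{i,j}\in A_i$ forces $i\in M(x)$), giving
\[
\epsilon|L|^2\le \sum_x |M(x)|\,|L(x)|\le \Bigl(\max_x |L(x)|\Bigr)\sum_x |M(x)|\le C|L|\,\max_x |L(x)|,
\]
whence $\max_x|L(x)|\ge(\epsilon/C)|L|$; the maximising $x$ has $M(x)\neq\emptyset$, so some $i$ with $x\in A_i$ exists and automatically lies in $L_1=L(x)$. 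Alternatively, just follow the paper: average to find one popular $i$, then pigeonhole inside $A_i$.
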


\begin{proof} The set version is the special case $r=0$ of the metric entropy version, so it suffices to prove that latter version. By the assumption and an averaging argument there exists $i \in L$ such that for a proportion at least $\epsilon$ of the indices $j \in L$, there exist $x \in A_i$ and $y \in A_j$ satisfying \eqref{Non-empty intersection of balls}. Let $L'$ be the family of these indices $j$. Since $|A_i| \le C$, by the pigeonhole principle there exists $x \in A_i$ such that for a proportion at least $1/C$ of the indices $j \in L'$ there exists $y \in A_j$ satisfying \eqref{Non-empty intersection of balls}, so in particular $B(y,r) \subset B(x, 3r)$. We then take $L_1$ to be the set of these indices $j$. \end{proof}

As we will explain later in our proofs in Section \ref{Section: Proofs of the main theorems}, iterating Lemma \ref{Common element lemma} several times in its set version shows that we can find a dense subset $\Lambda$ of $[s]$ and a set $A$ with size at most $C$ such that $A_i \cap A_j \subset A$ for a proportion at least $1-\e$ of the pairs $(i,j) \in \Lambda^2$. Iterating the metric entropy version several times gives an analogous consequence: we can find a dense subset $\Lambda$ of $[s]$ and a set $A$ of at most $C$ elements such that the intersection $A_i \cap A_j$ is contained in the union of balls with radius $3r$ centered at the elements of $A$ for a proportion at least $1-\e$ of the pairs $(i,j) \in \Lambda^2$.

\section{Proofs of the main results}\label{Section: Proofs of the main theorems}

Whenever $f$ is a function defined on $\{0,1\}^n$, we write $\cod f$ for the codomain of $f$. 

\subsection{Obtaining close distributions}

We first set out to prove Proposition \ref{Close distributions for all functions depending on a bounded number of coordinates}. To do this, we first use irreducible sets to give a condition which entails close distributions. We prove it in slightly greater generality than is needed in our proof of Proposition \ref{Close distributions for all functions depending on a bounded number of coordinates} as we shall use the more general version later on.

\begin{proposition}\label{Establishing proximities in distribution} Let $k \ge 1$, $D_0 \ge 1$ be positive integers, and let $\eta>0$. Let $\mathcal{F}$ be a family of functions defined on $\{0,1\}^n$ each with finite codomain of size at most $D_0$. Let $\ca, \cb$ be non-empty subsets of $\{0,1\}^n$, and let $I_{\ca}$ and $I_{\cb}$ be the respective unions of all sets in $\mathcal{I}_{\le k}(\ca, \mathcal{F}, \eta)$ and in $\mathcal{I}_{\le k}(\cb, \mathcal{F}, \eta)$. Assume that $I_{\ca} = I_{\cb}$ and that $\TV_{\ca, \cb}(I_{\ca}) \le \eta$. Then \[\TV_{\ca, \cb}(T) \le 10 D_0^k k^{1/2} \eta^{1/4} = o_{\eta \rightarrow 0, k, D_0}(1)\] for every $T \subset \mathcal{F}$ with $|T| \le k$. 

\end{proposition}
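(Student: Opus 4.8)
The plan is to first reduce everything to controlling the joint distribution of a fixed set $T$ with $|T| \le k$, and to dispose of $T$ by comparing it against the ``anomalous part'' $I_{\ca} = I_{\cb} =: I$. The key structural input is Lemma \ref{Low conditional negentropy}: writing $T' = T \setminus I$ and $T'' = T \cap I \subseteq I$, the set $T'$ is disjoint from $I_{\le k}(\ca, \mathcal{F}, \eta)$ and has size at most $k$, so conditioning on $T''$ (which has size at most $k$ as well, though one must take care that $|T'| + |T''| = |T| \le k$, so the hypothesis of the lemma is met) gives $\J_{\ca}(T' \mid T'') \le (2k-1)\eta$ and similarly $\J_{\cb}(T' \mid T'') \le (2k-1)\eta$. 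Because $T'' \subseteq I$ and by hypothesis $I$ has total variation distance at most $\eta$ between $\ca$ and $\cb$, the distribution of the coordinates indexing $T''$ is essentially the same on both sets; so the remaining task is: conditionally on each value $v$ of $T''$, the variable $T'$ is nearly uniform on $\ca$ (by low conditional negentropy plus Lemma \ref{Low negentropy implies approximate equidistribution}, via the Markov-type bound \eqref{Markov inequality}), and likewise nearly uniform on $\cb$, hence their two conditional distributions are close to each other; averaging over $v$ with nearly identical weights then yields that the joint distribution of $T = T' \cup T''$ is close on $\ca$ and $\cb$.

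\textbf{Key steps in order.} First I would set $I = I_{\ca} = I_{\cb}$, $T'' = T \cap I$, $T' = T \setminus I$, and note $|T'| + |T''| \le k$. Second, apply Lemma \ref{Low conditional negentropy} to get $\J_{\ca}(T' \mid T'') \le (2k-1)\eta$ and $\J_{\cb}(T' \mid T'') \le (2k-1)\eta$. Third, I need a lower bound on $\P_{\ca}[T'' = v]$ for each value $v$ in the support: since $T''$ consists of at most $k$ functions each with codomain of size at most $D_0$, there are at most $D_0^k$ possible values, but that alone does not lower-bound individual probabilities — here the right move is to throw away the values $v$ with $\P_{\ca}[T'' = v]$ or $\P_{\cb}[T'' = v]$ below some threshold $\theta$ (to be optimized, something like $\theta = \eta^{1/2}$), contributing at most $2 D_0^k \theta$ to the total variation; for the surviving values, $\P_{\ca}[T'' = v] \ge \theta$ gives, via \eqref{Markov inequality}, $\J_{\ca}(T' \mid T'' = v) \le (2k-1)\eta/\theta$, and Lemma \ref{Low negentropy implies approximate equidistribution} then yields $|\P_{\ca}[T' = w \mid T'' = v] - 1/|\cod T'|| \le (8 (2k-1)\eta/(\theta |\cod T'|))^{1/2}$, with the same bound on $\cb$. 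Fourth, combine: for surviving $v$, the conditional distributions of $T'$ on $\ca$ and on $\cb$ are each within $D_0^k \cdot (8(2k-1)\eta/\theta)^{1/2}$ of uniform in total variation, hence within twice that of each other. Fifth, use $\TV_{\ca,\cb}(I) \le \eta$ (which dominates $\TV_{\ca,\cb}(T'')$ since $T'' \subseteq I$) to say $\sum_v |\P_{\ca}[T''=v] - \P_{\cb}[T''=v]| \le \eta$, and then write $\TV_{\ca,\cb}(T)$ as a sum over $(v,w)$, splitting into the contribution from discarded $v$'s, the contribution from the mismatch in weights $\P[T''=v]$, and the contribution from the conditional-distribution discrepancy; each piece is $O(D_0^k k^{1/2}(\eta/\theta)^{1/2} + D_0^k\theta + \eta)$. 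Choosing $\theta \asymp \eta^{1/2}$ balances the first two at order $D_0^k k^{1/2}\eta^{1/4}$, absorbing the constants into the stated bound $10 D_0^k k^{1/2}\eta^{1/4}$.

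\textbf{Main obstacle.} The main obstacle is the handling of small-probability values of $T''$: low conditional negentropy $\J_{\ca}(T' \mid T'')$ is an \emph{average} over values of $T''$, so it gives no information conditionally on a rare value, which is exactly the subtlety flagged in the paragraph preceding inequality \eqref{Markov inequality}. The fix — truncating at a threshold $\theta$ and optimizing — is routine once one sees it, but it is what forces the exponent $1/4$ rather than something closer to $1/2$, and one must be slightly careful that truncating on the $\ca$-side and the $\cb$-side are compatible (the set of ``surviving'' values should be those large on \emph{both} sides, which is fine because $\TV_{\ca,\cb}(T'') \le \eta$ is small, so a value large on one side and tiny on the other can only contribute $O(\eta)$). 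A secondary, purely bookkeeping, point is to make sure the final exponent and constant genuinely come out as claimed after all the $D_0$ and $k$ factors are tracked; since the statement only asks for $o_{\eta \to 0, k, D_0}(1)$, any polynomial-in-$(k, D_0)$, positive-power-of-$\eta$ bound suffices and the stated $10 D_0^k k^{1/2}\eta^{1/4}$ is a convenient explicit choice.
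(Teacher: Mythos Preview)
Your proposal is correct and follows essentially the same approach as the paper's proof: decompose $T$ into $T \cap I$ and $T \setminus I$, use Lemma~\ref{Low conditional negentropy} to bound $\J(T \setminus I \mid T \cap I)$, threshold the values of $T \cap I$ at order $\eta^{1/2}$, apply the Markov-type inequality \eqref{Markov inequality} and Lemma~\ref{Low negentropy implies approximate equidistribution} on the surviving values, and sum. Your write-up is in fact notationally cleaner than the paper's (which writes $I$ where it means $T \cap I$), and you correctly flag the small-probability truncation as the source of the $\eta^{1/4}$ exponent.
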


\begin{proof} We write $I$ for the set $I_{\ca} = I_{\cb}$. Let $T \subset \mathcal{F}$ with $|T| \le k$, and let $x$ be a value in the codomain of $(f: f \in T)$. We can write \begin{equation}\P_{\ca}[T = x] = \P_{\ca}[I = x_{|I}] \P_{\ca}[T \setminus I = x_{|T \setminus I} | I = x_{|I}] \label{decomposition in the proof of proximities in distribution} \end{equation} and similarly for the probability $\P_{\cb}[T = x]$.
	
Assume that $\P_{\ca}[I = x_{|I}] \le \eta^{1/2}/2$ or $\P_{\cb}[I = x_{|I}] \le \eta^{1/2}/2$. Without loss of generality we can assume that the first bound holds. Then by the assumption on $\TV_{\ca, \cb}$ we have $\P_{\cb}[I = x_{|I}] \le \eta^{1/2}/2 + \eta$. The decomposition \eqref{decomposition in the proof of proximities in distribution} and its analogue for $\cb$ then show that $\P_{\ca}[T=x]$ and $\P_{\cb}[T=x]$ are both at most $\eta^{1/2}/2 + \eta$, so in particular \[|\P_{\ca}[T=x] - \P_{\cb}[T=x]| \le \eta^{1/2} + \eta.\]

Assume now instead that $\P_{\ca}[I = x_{|I}]$ and $\P_{\ca}[I = x_{|I|}]$ are both at least $\eta^{1/2}/2$. Then by \eqref{Markov inequality} we have \[\J_{\ca}(T \setminus I | I = x_{|I}) \le 2 \eta^{-1/2} \J_{\ca}(T \setminus I),\] so by Lemma \ref{Low conditional negentropy} we get \[\J_{\ca}(T \setminus I | I = x_{|I}) \le 4 \eta^{-1/2} k \eta,\] and then by Lemma \ref{Low negentropy implies approximate equidistribution} we obtain \[|\P_{\ca}[T \setminus I = x_{|T \setminus I} | I = x_{|I}] - 1/(|\cod(T \setminus I)|| \le ((32 k \eta^{1/2}/|\cod(T \setminus I)|)^{1/2}.\] By our assumption we have \[|\P_{\ca}[I = x_{|I}] - \P_{\cb}[I = x_{|I}]| \le \eta,\] so by the decomposition \eqref{decomposition in the proof of proximities in distribution} and its analogue for $\cb$ we obtain \[|\P_{\ca}[T = x] - \P_{\cb}[T = x]| \le 2 \eta + (32k \eta^{1/2}/|\cod(T \setminus I)|)^{1/2} \le 2 \eta + 8 k^{1/2} \eta^{1/4}. \] Summing over all $x \in \cod (f: f \in T)$ we therefore conclude \[\TV_{\ca,\cb}(T) \le D_0^k (2 \eta + 8 k^{1/2} \eta^{1/4}). \qedhere \] \end{proof}

We now deduce a statement which implies Proposition \ref{Close distributions for all functions depending on a bounded number of coordinates} as a special case.

\begin{proposition} \label{Close distributions for all forms with bounded support} Let $k,K,D_0 \ge 1$ be integers. Let $\mathcal{F}$ be a family of functions defined on $\{0,1\}^n$ each with finite codomain of size at most $D_0$. Let $\eta \in (0,1)$. Let $(\ca_i)_{i \in L}$ be a finite family of non-empty subsets of $\{0,1\}^n$. We assume \begin{equation} |\mathcal{I}_{\le k}(\ca_i, \mathcal{F}, \eta)| \le K \label{assumption that the irreducible families have bounded size} \end{equation} for each $i \in L$. Then there exists $N(k, K)$ such that if $|\cf| \ge N$, then there exists $L' \subset L$ with $|L'| \ge \frac{(\eta/Kk)^{D_0^{Kk}}}{2 \binom{|\cf|}{kK}} |L|$ satisfying \begin{equation} \Diam_{\ca_i: i \in L'} (g) \le 10 D_0^k k^{1/2} \eta^{1/4} \label{functions depending on few coordinates have close distributions} \end{equation} for every function $g$ of the type $F \circ (f_1, \dots, f_k)$ where $f_1, \dots, f_k \in \mathcal{F}$ and $F$ is some arbitrary function defined on $\cod (f_1) \times \dots \times \cod (f_k)$.

\end{proposition}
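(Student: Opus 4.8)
The plan is to combine Proposition \ref{Establishing proximities in distribution} with a pigeonholing argument over the possible ``signatures'' that a set $\ca_i$ can have. First I would observe that by the hypothesis \eqref{assumption that the irreducible families have bounded size}, for each $i \in L$ the set $I_{\le k}(\ca_i, \mathcal{F}, \eta)$ is obtained as the union of at most $K$ subsets of $\mathcal{F}$, each of size at most $k$, hence is itself a subset of $\mathcal{F}$ of size at most $Kk$. Thus $I_{\le k}(\ca_i, \mathcal{F}, \eta)$ ranges over a family of size at most $\binom{|\cf|}{\le Kk} \le (Kk+1)\binom{|\cf|}{Kk}$ as $i$ varies. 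By a first pigeonholing over $i \in L$, I can pass to a subfamily $L_0 \subset L$, of size at least $|L|/\bigl((Kk+1)\binom{|\cf|}{Kk}\bigr)$, on which the set $I_i := I_{\le k}(\ca_i, \mathcal{F}, \eta)$ is the \emph{same} set $I$ for all $i \in L_0$; this handles the requirement $I_{\ca} = I_{\cb}$ in Proposition \ref{Establishing proximities in distribution}.

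Next I would handle the requirement $\TV_{\ca,\cb}(I) \le \eta$. Since $|I| \le Kk$ and each function in $\mathcal{F}$ has codomain of size at most $D_0$, the random variable $(f : f \in I)$ takes values in a set $\cod(I)$ of size at most $D_0^{Kk}$. I would discretise: cover the simplex of probability distributions on $\cod(I)$ by a grid of mesh, say, $\eta/(2D_0^{Kk})$ in the $\ell^\infty$ metric on the coordinates, which requires at most $(2D_0^{Kk}/\eta)^{D_0^{Kk}} \le (Kk/\eta)^{\text{-}\,D_0^{Kk}}$-many cells (I would use a clean bound of the shape $(\eta/(Kk))^{-D_0^{Kk}}$, matching the denominator in the statement up to the constant $2$). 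Pigeonholing $i \in L_0$ by which grid cell the distribution of $(f:f\in I)$ on $\ca_i$ falls into, I obtain $L' \subset L_0$ of the claimed size on which any two of these distributions differ by at most $\eta/2 \cdot |\cod(I)| \le \eta/2$ in total variation... — here I should be slightly careful and choose the mesh so the resulting total variation bound is genuinely $\le \eta$, which only changes absolute constants.

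With $L'$ in hand, for any two indices $i,j \in L'$ the pair $(\ca_i, \ca_j)$ satisfies exactly the hypotheses of Proposition \ref{Establishing proximities in distribution} (same irreducible union $I$, and $\TV$ of $I$ on the pair at most $\eta$), so $\TV_{\ca_i,\ca_j}(T) \le 10 D_0^k k^{1/2} \eta^{1/4}$ for every $T \subset \mathcal{F}$ with $|T| \le k$. Taking the maximum over all pairs in $L'$ gives \eqref{functions depending on few coordinates have close distributions} for every $g = F \circ (f_1,\dots,f_k)$, since the distribution of such a $g$ on $\ca_i$ is the pushforward under $F$ of the distribution of $(f_1,\dots,f_k)$, and pushforwards do not increase total variation distance; applying this with $T = \{f_1,\dots,f_k\}$ finishes the argument. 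Finally I would record that the whole scheme needs $|\cf|$ large enough that $\binom{|\cf|}{kK}$ makes sense and the various estimates are non-vacuous, which is what $N(k,K)$ encodes. The main obstacle is bookkeeping the constants so that the size bound comes out exactly as $\frac{(\eta/Kk)^{D_0^{Kk}}}{2\binom{|\cf|}{kK}}|L|$: one must merge the two pigeonholing losses (the $(Kk+1)\binom{|\cf|}{Kk}$ from fixing $I$, and the grid-cell count from fixing the distribution of $I$) into that single expression, absorbing the polynomial factor $Kk+1$ and the factor $2^{D_0^{Kk}}$ from the mesh choice into the leading constant $2$ — this is routine but fiddly, and is where I would be most careful.
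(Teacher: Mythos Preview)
Your proposal is correct and follows essentially the same approach as the paper: bound $|I_i|\le kK$, pigeonhole once over the at most $2\binom{|\cf|}{kK}$ possible choices of $I_i$ (this is where $N(k,K)$ enters, exactly as you say), pigeonhole a second time over a discretisation of the simplex of distributions on $\cod(I)$, and then invoke Proposition~\ref{Establishing proximities in distribution} pairwise. Your added remark that the bound on $T$ transfers to $g=F\circ(f_1,\dots,f_k)$ because pushforwards do not increase total variation is a point the paper leaves implicit, and your caution about merging the two pigeonholing losses into the stated constant is well placed---the paper is equally informal there.
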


\begin{proof} For each $i \in L$ let $I_i = I_{\le k}(\ca_i, \mathcal{F}, \eta)$. By our assumption \eqref{assumption that the irreducible families have bounded size} and a union bound we obtain $|I_i| \le kK$ for each $i \in L$. There are therefore at most \[\sum_{0 \le t \le kK} \binom{|\cf|}{t} \le 2 \binom{|\cf|}{kK}\] possibilities for each set $I_i$, provided that $|\cf|$ is large enough. By the pigeonhole principle there exists a subset $L_1$ of $L$ of size at least $|L|/ (2\binom{|\cf|}{kK})$ and a set $I \subset \lbrack n \rbrack$ with $|I| \le kK$ such that $I_i = I$ for every $i \in L_1$. Applying the pigeonhole principle again, there exists a subset $L'$ of $L_1$ with size at least $(\eta/Kk)^{D_0^{Kk}}|L_1|$ and a probability distribution $\Delta$ on $\prod_{i \in I} \cod (f_i)$ such that $I_i = I$ for every $i \in L'$, and the distribution on $\ca_i$ of the map $(f: f \in I)$ is within total variation distance at most $\eta$ of $\Delta$. We conclude by Proposition \ref{Establishing proximities in distribution}. \end{proof}

Using Proposition \ref{Bounded number of irreducible sets} we obtain Proposition \ref{Close distributions for all functions depending on a bounded number of coordinates} as a special case of Proposition \ref{Close distributions for all forms with bounded support} by taking $\mathcal{F} = \mathcal{F}_0$, $D_0 = 2$ and $K = (k+1)! (\eta^{-1}\log (2\delta^{-1}))^k$.

\subsection{Obtaining almost positive correlations}

We next set out to prove Theorem \ref{Almost positive correlations}. If we now consider a family of superpolynomially many dense subsets $\ca_i \subset \{0,1\}^n$ rather than just one, and assume that the centres $\phi_1, \dots \phi_b$ with $b \le B$ of the balls involved in Proposition \ref{The set of mod p forms not about uniform on a dense subset of the cube is contained in boundedly many balls} are the same as $i$ varies, then applying Proposition \ref{Close distributions for all forms with bounded support} to the family $\mathcal{F}_0 \cup \{\phi_1, \dots \phi_b\}$ and to $k=b+r$ shows that there exist distinct $i,j$ such that every mod-$p$ form which is within support distance at most $r$ of one of the mod-$p$ forms $\phi_1, \dots \phi_b$ has close distributions on $\ca_i$ and $\ca_j$. As this is still the case for any other mod-$p$ form (since it is approximately uniformly distributed on both $\ca_i$ and $\ca_j$), we conclude (by taking $\a$ to be sufficiently small and then $n$ to be sufficiently large) that every mod-$p$ form has close distributions on $\ca_i$ and $\ca_j$.

However, in general the centres of the balls can vary from one index $i$ to another, which is why the proof we have just suggested will not work in general (and a conclusion as strong as that of Proposition \ref{Close distributions for all functions depending on a bounded number of coordinates} cannot hold for all mod-$p$ forms, as we have already discussed in Example \ref{No close distributions}). Instead we shall resort to Lemma \ref{Common element lemma} to obtain a situation which is more similar to the one that we just described.

The next proposition provides us with a structure that will allow us to conclude almost positive correlations.

\begin{proposition}\label{Set R for the proof of correlation} Let $p$ be a prime, let $m \ge 2$ be an integer, and let $\delta>0, \eta>0, \alpha>0, \epsilon>0$. Then there exist positive integers $r,B$ depending on $\delta, \alpha$ (and $p$) only and positive integers $A,A',C>0$ depending on $\delta, \eta, \a, \epsilon, m$ (and $p$) only, such that the following holds. For all integers $s \ge 1$ and all non-empty subsets $\ca_1, \dots, \ca_s$ of $\{0,1\}^n$ each with density at least $\delta$, there exist a subset $R \subset \lbrack s \rbrack$ with $|R| \ge s/(A'n^A)$, a set $\Phi$ of mod-$p$ forms with $|\Phi| \le B$, and a subset $I \subset \mathcal{F}_0 \cup \Phi$ with $|I| \le C$ such that the three following properties are satisfied.

\begin{enumerate}[(i)]

\item For a proportion at least $(1-\epsilon)$ of the $m$-tuples $(i_1, \dots, i_m) \in R^m$, if $\phi$ is a mod-$p$ form such that $\TV_{\ca}(\phi, U_p) \ge \a$ for at least two distinct indices $t \in \lbrack m \rbrack$, then $\phi$ is contained in some ball $B(\phi, 3r)$ for some linear combination $\phi$ of elements of $\Phi$.

\item We have $I_{\le 3r+B}(\ca_i, \mathcal{F}_0 \cup \Phi, \eta) = I$ for every $i \in R$.

\item The diameter $\Diam_{\ca_{i}: i \in R}(I)$ is at most $\eta$.

\end{enumerate}

\end{proposition}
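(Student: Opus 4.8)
The plan is to obtain all three properties simultaneously from a single iteration that interleaves the metric entropy version of Lemma~\ref{Common element lemma} with two pigeonholing steps; the interleaving is forced by a tension between property~(i) and properties~(ii)--(iii) that I describe at the end. First I would apply the proof of Proposition~\ref{The set of mod p forms not about uniform on a dense subset of the cube is contained in boundedly many balls} to each $\ca_i$: it produces a set $G_i$ of at most $l = l(\delta,\alpha,p)$ mod-$p$ forms such that every mod-$p$ form $\phi$ with $\TV_{\ca_i}(\phi, U_p) \ge \alpha$ lies within support distance $r = r(\delta,\alpha,p)$ of the linear span of $G_i$. I would set $B := l\, p^{l}$ and $k := 3r + B$, and run an iteration maintaining a dense $R_t \subseteq [s]$, a set $\Phi^{(t)}$ of ``captured'' forms which is a union of at most $t$ of the $G_i$ (so $|\Phi^{(t)}| \le B$ throughout), and, for each $i \in R_t$, the set $U_i^{(t)}$ of those elements of $\mathrm{span}(G_i)$ lying at support distance more than $2r$ from $\mathrm{span}(\Phi^{(t)})$.

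A generic stage would run as follows. I first partition $R_t$ according to the pair consisting of $I_{\le k}(\ca_i, \mathcal{F}_0 \cup \Phi^{(t)}, \eta)$ and the closest point, in a fixed $\tfrac{\eta}{2}$-net of the probability simplex, to the distribution on $\ca_i$ of the tuple of functions in that set. A routine extension of Proposition~\ref{Bounded number of irreducible sets} to the family $\mathcal{F}_0 \cup \Phi^{(t)}$, whose negentropy on any density-$\delta$ subset of the cube is at most $\log\delta^{-1} + B\log p$, bounds $|I_{\le k}(\ca_i, \mathcal{F}_0 \cup \Phi^{(t)}, \eta)|$ by a constant $C = C(\delta,\eta,\alpha,p)$; hence there are at most $\mathrm{poly}(n)$ cells and the largest one, call it $R_t'$, has $|R_t'| \ge |R_t|/\mathrm{poly}(n)$, and on $R_t'$ the data needed for (ii) and (iii) are already fixed. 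I then test whether for at least an $\epsilon/\binom{m}{2}$ proportion of pairs $(i,j) \in (R_t')^2$ there exist $x \in U_i^{(t)}$ and $y \in U_j^{(t)}$ with $B(x,r) \cap B(y,r) \ne \emptyset$. If not, I stop and set $R := R_t'$, $\Phi := \Phi^{(t)}$, and $I$ equal to the common value of $I_{\le k}(\ca_i, \mathcal{F}_0 \cup \Phi, \eta)$ on $R_t'$. If so, I apply the metric entropy version of Lemma~\ref{Common element lemma} to $(U_i^{(t)})_{i \in R_t'}$ with radius $r$: it returns $R_{t+1} \subseteq R_t'$ with $|R_{t+1}| \ge \big(\epsilon/(\binom{m}{2} p^{l})\big)|R_t'|$, an index $i^*$, and some $x^* \in U_{i^*}^{(t)}$ such that every $j \in R_{t+1}$ has some $y_j \in U_j^{(t)}$ with $B(y_j, r) \subseteq B(x^*, 3r)$; I then set $\Phi^{(t+1)} := \Phi^{(t)} \cup G_{i^*}$ and pass to stage $t+1$.

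The iteration halts within $p^{l}$ stages: for the support distance, $B(y_j,r) \subseteq B(x^*,3r)$ forces $d(y_j, x^*) \le 2r$, so $y_j$ now lies within $2r$ of $\mathrm{span}(\Phi^{(t+1)})$ and hence $y_j \in U_j^{(t)} \setminus U_j^{(t+1)}$; thus $|U_j^{(t+1)}| < |U_j^{(t)}| \le |\mathrm{span}(G_j)| \le p^{l}$ for every surviving $j$, and once all surviving $U_j$ are empty the stopping test is passed automatically. When the process stops, (ii) and (iii) hold on $R$ by the choice of the pigeonhole cell; and for all but an $\epsilon/\binom{m}{2}$ proportion of pairs $(i,j) \in R^2$ — namely those admitting no $x \in U_i^{(t)}$, $y \in U_j^{(t)}$ with $B(x,r)\cap B(y,r)\ne\emptyset$ — any mod-$p$ form $\phi$ that is $\alpha$-biased on both $\ca_i$ and $\ca_j$ lies in $B(x,r)\cap B(y,r)$ for some $x \in \mathrm{span}(G_i)$, $y \in \mathrm{span}(G_j)$, and since not both of $x,y$ can lie in $U_i^{(t)}$ and $U_j^{(t)}$, one of them — say $x$ — is within $2r$ of a linear combination $\bar\phi$ of $\Phi$, so $\phi \in B(x,r) \subseteq B(\bar\phi,3r)$. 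A union bound over the $\binom{m}{2}$ coordinate-pairs of a uniformly random $m$-tuple in $R^m$, together with the remark that the $m$-tuples with a repeated coordinate form a negligible fraction in the regime of interest, then gives property~(i). Since the index set shrinks by a $\mathrm{poly}(n)$ factor at each of the $O(p^{l})$ pigeonholes and by a constant factor at each application of Lemma~\ref{Common element lemma}, the final $R$ satisfies $|R| \ge s/(A'n^{A})$ for suitable $A, A'$ depending only on the permitted parameters.

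The crux, and the reason the pigeonhole steps are placed inside the iteration rather than after it, is that property~(i) must hold with the prescribed constant $\epsilon$, whereas fixing $I$ and the law of its coordinates for (ii) and (iii) costs a factor of $n$ in the size of the index set: if one first fixed $\Phi$, verified (i) on the resulting dense set, and only then pigeonholed for (ii)--(iii), the proportion of pairs violating the decisive test could grow by a power of $n$ and break (i). Interleaving avoids this because the test that delivers (i) is applied to the set $R_t'$ that has \emph{already} been pigeonholed, at the stage where the iteration terminates. Apart from this, the only points needing verification are the stated extension of Proposition~\ref{Bounded number of irreducible sets} to $\mathcal{F}_0 \cup \Phi^{(t)}$ (an entropy computation identical to part~(i) of that proposition), the elementary metric fact that $B(y,r) \subseteq B(x,3r)$ forces $d(x,y) \le 2r$ once $n$ is large enough to leave spare coordinates, and the standard estimate on the number of cells in the two pigeonholes.
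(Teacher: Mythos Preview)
Your argument is correct, but it takes a genuinely different route from the paper's, and the justification you give for that route is mistaken.

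The paper does \emph{not} interleave. It first runs the metric-entropy version of Lemma~\ref{Common element lemma} to completion on the sets $\Phi_i$ (its analogue of your $\mathrm{span}(G_i)$), obtaining $\Phi$ and a set $L_{a_{\max}}$; only afterwards does it pigeonhole, once, on the pair $\bigl(I_{\le 3r+B}(\ca_i,\mathcal{F}_0\cup\Phi,\eta),\ \text{distribution of that tuple}\bigr)$ to pass to $R$. The concern you raise in your final paragraph---that the $n^{-C}$ loss from the pigeonhole would inflate the exceptional proportion in~(i) by a factor $n^{2C}$---is real, but the paper resolves it more simply: it lets the threshold $\epsilon'$ used in Lemma~\ref{Common element lemma} itself carry a factor $n^{-2C}$, specifically $\epsilon' = \binom{m}{2}^{-1}\bigl((\eta/p^C)^{p^C}/(4n^C)\bigr)^{2}\epsilon$. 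Since the Lemma~\ref{Common element lemma} iteration runs at most $B$ times, the cumulative shrinkage $(\epsilon'/B)^B$ is still only polynomial in $n$, and after the pigeonhole the exceptional proportion on $R^2$ is at most $\epsilon' \cdot (|L_{a_{\max}}|/|R|)^2 \le \epsilon/\binom{m}{2}$ as required.

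So the crux you identify is not a crux at all: the two phases can be separated. What your interleaving buys is that the threshold in each application of Lemma~\ref{Common element lemma} can be taken independent of $n$, at the expense of performing the pigeonhole up to $p^l$ times rather than once; since $p^l$ is a constant, the final exponent on $n$ is comparable in both approaches. The paper's version is structurally cleaner, while yours makes more explicit why each surviving $U_j$ shrinks. Both are valid proofs of the proposition.
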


\begin{proof} By Proposition \ref{The set of mod p forms not about uniform on a dense subset of the cube is contained in boundedly many balls} there exist positive integers $r$ and $B$ (depending on $\delta$ and $\a$) such that for each $i \in \lbrack s \rbrack$ we can associate with $\ca_i$ a family $\Phi_i$ of size at most $B$ of mod-$p$ forms such that every mod-$p$ form $\phi$ satisfying $\TV_{\ca_i}(\phi, U_p) \le \a$ is contained in the union of balls $\bigcup_{\psi \in \Phi_i} B(\psi, r)$. 

We then iterate the metric entropy version of Lemma \ref{Common element lemma} on the sets $\Phi_i$ as follows to obtain a set $\Phi$ of ``common" mod-$p$ forms. Let $\epsilon'$ be a quantity that will depend on $\epsilon$ and $n$ and which we shall fix later.

If a proportion at least $\epsilon'$ of the pairs $(i,j)$ of indices in $\lbrack s \rbrack$ are such that $\ca_i$ and $\ca_j$ satisfy \eqref{Non-empty intersection of balls} for some $x \in \Phi_i$ and $y \in \Phi_j$, then applying Lemma \ref{Common element lemma} we obtain a subset $L_1 \subset \lbrack s \rbrack$ with $|L_1| \ge (\epsilon'/B) s$ and a mod-$p$ form $\phi_1$ such that for every $j \in L_1$ there exists $\phi \in \Phi_j$ satisfying $B(\phi, r) \subset B(\phi_1, 3r)$. We then define $\Phi_j^1:= \Phi_j \setminus \{\phi\}$ for each $j \in L_1$. Otherwise, we stop the process.

More generally, at step $a$ of the process, if a proportion at least $\epsilon'$ of the pairs $(i,j)$ of elements of $L_{a-1}$ satisfies \eqref{Non-empty intersection of balls} for some $x \in \Phi_i^{a-1}$ and $y \in \Phi_j^{a-1}$, then applying Lemma \ref{Common element lemma} we obtain a subset $L_a \subset L_{a-1}$ with $|L_a| \ge (\epsilon'/B) |L_{a-1}|$ and a mod-$p$ form $\phi_a$ such that for every $j \in L_a$ there exists $\phi \in \Phi_j$ satisfying $B(\phi, r) \subset B(\phi_a, 3r)$. We then define $\Phi_j^a$ to be $\Phi_j^{a-1} \setminus \{\phi\}$ for each $j \in L_a$. Otherwise, we stop the process.

Let $a_{\max}$ be the number of steps that we take before the process stops. We necessarily have $a_{\max} \le B$, since taking $j \in L_{a_{\max}}$, we have $|\Phi^j| \le B$ and for each $a \in \lbrack a_{\max} \rbrack$ we have $|\Phi_j^a| \le |\Phi_j^{a-1}| - 1$ (where we write $\Phi_j^0$ for $\Phi_j$). We then let \[\Phi = \{\phi_1, \dots, \phi_{a_{\max}}\}\] and obtain a subset $L_{a_{\max}} \subset \lbrack s \rbrack$ with size at least $(\epsilon'/B)^B s$ such that for a proportion at least $1-\epsilon'$ of the pairs $(i,j) \in L_{a_{\max}}^2$ we have \[B(x,r) \cap B(y,r) = \emptyset\] for all $x \in \Phi_i \setminus \bigcup_{1 \le a \le a_{\max}} B(\phi_a, 3r)$ and $y \in \Phi_j \setminus \bigcup_{1 \le a \le a_{\max}} B(\phi_a, 3r)$.

We then assign to each $i \in L_{a_{\max}}$ the support \[I_{\le 3r+B}^{(i)}:= I_{\le 3r+B}(\ca_i, \mathcal{F}_0 \cup \Phi, \eta)\] of the irreducible sets with size at most $3r+B$, which by Proposition \ref{Bounded number of irreducible sets} applied with $f = (\phi_1, \dots, \phi_{a_{\max}})$ and the union bound has size at most some $C$ depending on $\delta, \eta, \a$ only.

There are therefore at most $\sum_{0 \le t \le C} \binom{n+B}{t}$ possibilities for $I_{3r+B}^{(i)}$. For $n$ large enough this is at most \[2 \binom{n+B}{C} \le 2(n+B)^C \le 4n^C.\] By the pigeonhole principle there therefore exists a subset $R_{\mathrm{pre}}$ of $L_{a_{\max}}$ with size at least $(4n^C)^{-1} |L_{a_{\max}}|$ and a subset $I \subset \mathcal{F}_0 \cup \Phi$ such that $I_{\le 3r+B}^{(i)} = I$ for all $i \in R_{\mathrm{pre}}$.

The family $I$, considered jointly as one function, has a codomain with size at most $p^C$. Applying the pigeonhole principle on the distribution of $I$ we obtain a subset $R \subset R_{\mathrm{pre}}$ with $|R| \ge (\eta/p^C)^{p^C} |R_{\mathrm{pre}}|$ such that $\max\limits_{(i,j) \in R^2} \TV_{\ca_i, \ca_j} I \le \eta$. Choosing \[\epsilon' = ((\eta/p^C)^{p^C}/(4n^C))^{2} \epsilon\] then ensures that (i) is satisfied in the case $m=2$, and \[\epsilon' = \binom{m}{2}^{-1} ((\eta/p^C)^{p^C}/ (4n^C))^{2} \epsilon\] suffices for general $m$ by a union bound. \end{proof}

We can now conclude the proof of Theorem \ref{Almost positive correlations}.

\begin{proof}[Proof of Theorem \ref{Almost positive correlations}] Let $\nu>0$ be fixed. We shall choose $\alpha$ depending on $\nu$, then $\eta$ depending on $\alpha$ and $\nu$, and then apply Proposition \ref{Set R for the proof of correlation}. Let $\Phi$ and $r$ be as in Proposition \ref{Set R for the proof of correlation}. We fix $(i_1, \dots, i_m)$ such that (i) from Proposition \ref{Set R for the proof of correlation} is satisfied, and consider a mod-$p$ form $\phi$.

Let us first assume that $\phi$ is outside the union of balls $\bigcup_{\psi \in \Phi} B(\psi, 3r)$. Then by (i) from Proposition \ref{Set R for the proof of correlation} we have $\TV(\phi, U_p) \le \a$ for all but at most one index $t \in \lbrack m \rbrack$. Choosing $\a$ sufficiently small (depending on $m,\nu$ only) then guarantees $\cor_{\ca_{i_1}, \dots, \ca_{i_m}} \phi \ge - \nu$.

If instead $\phi \in \bigcup_{\psi \in \Phi} B(\psi, 3r)$, then $\phi$ can be expressed as a linear combination (so in particular, as a function) of at most $3r+1$ functions in $\Phi \cup \mathcal{F}_0$. Using (ii) and (iii) from Proposition \ref{Set R for the proof of correlation} and applying Proposition \ref{Establishing proximities in distribution} we obtain $\TV_{\ca_{i_1}, \dots, \ca_{i_m}} \phi = o_{\eta \rightarrow 0,r}(1)$. Using the first implication in the hierarchy discussed after Definition \ref{Ways to measure proximity between two probability distributions}, we can choose $\eta$ small enough depending on $r$ (so indirectly on $\alpha$) which ensures $\cor_{\ca_{i_1}, \dots, \ca_{i_m}} \phi \ge 0$ and in particular $\cor_{\ca_{i_1}, \dots, \ca_{i_m}} \phi \ge - \nu$. \end{proof}

\subsection{Obtaining overlap bounded below}

We finally set out to prove Theorem \ref{Overlap bounded below}. We begin with the following lemma, which provides a condition that suffices to guarantee lower bounds on the overlaps of distributions of mod-$p$ linear forms on dense subsets of the cube.

\begin{proposition}\label{Obtaining an overlap bounded below}

Let $k \ge 1, m \ge 2$, $B \ge 1$, $r \ge 0$ be integers such that $B+r \le k$. Let $\eta > 0$.  Let $\Phi$ be a set of linear forms $\F_p^n \rightarrow \F_p$ with size $B$. Let $\ca_1, \dots, \ca_m$ be non-empty subsets of $\{0,1\}^n$, and let $I_{\ca_1}, \dots, I_{\ca_m}$ be the unions of all sets in \[\mathcal{I}_{\le k}(\ca_1, \mathcal{F}_0 \cup \Phi, \eta), \dots, \mathcal{I}_{\le k}(\ca_m, \mathcal{F}_0 \cup \Phi, \eta)\] respectively. Assume that there exist pairwise disjoint subsets $I_0, I_1, \dots, I_m$ of $\mathcal{F}_0 \cup \Phi$ such that $I_{\ca_i} \subset I_0 \cup I_i$ for each $i \in \lbrack m \rbrack$, and furthermore such that $\Diam_{\ca_1, \dots \ca_m}(I_0) \le \eta$. Then every linear form $\phi$ in the union of balls $\bigcup_{\psi \in \Phi} B(\psi, r)$ satisfies \[\omega_{\ca_1, \dots, \ca_m}(\phi) \ge 2^{-(m+1)(p-1)} - o_{\eta \rightarrow 0, k, m}(1). \]

\end{proposition}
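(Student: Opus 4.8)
The plan is to reduce the statement to a lower bound on a single probability of an explicit event defined by a bounded number of mod-$p$ forms and coordinate forms, and then to invoke Proposition \ref{Lower bound on non-zero probabilities}. First I would fix a linear form $\phi \in \bigcup_{\psi \in \Phi} B(\psi, r)$; by definition $\phi = \psi + \chi$ where $\psi \in \Phi$ and $\chi$ has support size at most $r$, so $\phi$ can be written as a function of at most $B + r \le k$ functions from $\mathcal{F}_0 \cup \Phi$, and in particular the value $\phi(x)$ is determined by the values of the functions in a fixed set $T$ of size at most $k$. The goal is to produce a single value $v \in \F_p$ such that $\P_{\ca_i}[\phi = v]$ is bounded below by roughly $2^{-(m+1)(p-1)}$ for every $i \in [m]$ simultaneously; summing the minimum over the (at most $p$) values of $\phi$ then yields the claimed bound on $\omega_{\ca_1, \dots, \ca_m}(\phi)$, absorbing the lower-order terms into the $o_{\eta \rightarrow 0, k, m}(1)$ error.

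The strategy for finding such a $v$ is to use the structure coming from the $I_{\ca_i}$. Write $J = I_0 \cup I_1 \cup \dots \cup I_m$ and consider the joint function $G = (f : f \in J \cup T)$, which has codomain of bounded size. Since $\Diam_{\ca_1, \dots, \ca_m}(I_0) \le \eta$, the distributions of $I_0$ on the various $\ca_i$ nearly coincide; pick a value $x_0$ for $I_0$ that has probability bounded below (depending only on $p$, $k$, $m$) on all of $\ca_1, \dots, \ca_m$ — such a value exists because these distributions are close and each is a probability distribution on a bounded set, so some atom must be reasonably large on all of them. Having conditioned on $I_0 = x_0$, I would use that $I_{\ca_i} \subset I_0 \cup I_i$ together with Lemma \ref{Low conditional negentropy}: the functions in $T \setminus (I_0 \cup I_i)$ have small negentropy on $\ca_i$ even conditionally on $I_0 \cup I_i$ (more precisely on $I_0$ together with the relevant part of $I_i$), hence are close to jointly uniform, while the functions in $I_i$ are free to behave arbitrarily but lie in a bounded-size set. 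Concretely, on $\ca_i$, conditionally on $I_0 = x_0$, choose a value $x_i$ for the part of $I_i$ inside $T$ that has conditional probability bounded below (again this costs at most a factor $2^{-(p-1)}$ per coordinate, roughly), and then the remaining coordinates of $T$ on $\ca_i$ are approximately uniform, so $\phi$ takes each admissible value with probability close to uniform among the admissible values.

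The main obstacle — and the reason Proposition \ref{Lower bound on non-zero probabilities} is invoked rather than a purely entropic bound — is synchronising the choice of $v$ across all $m$ sets. The conditional-uniformity argument shows that, on each $\ca_i$ separately, $\phi$ is nearly uniform on the set $V_i$ of values it can attain after the conditioning, but a priori the sets $V_i$ could be disjoint. To control this I would instead work on the unconditioned cube $\{0,1\}^n$: the event $E$ that the relevant coordinate forms and forms in $\Phi$ take a fixed compatible tuple of values is an event defined by at most $k$ mod-$p$/coordinate forms, so by Proposition \ref{Lower bound on non-zero probabilities} it has probability either $0$ or at least $2^{-k(p-1)}$ on $\{0,1\}^n$; this guarantees that any value of $\phi$ that is attainable at all is attainable with a probability bounded below in a $p$- and $k$-dependent way, and in particular fixes the same "admissible" structure for all $i$ (since attainability is a property of the ambient cube, not of $\ca_i$). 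Combining: there is a value $v$ attained on the relevant slice; on each $\ca_i$ the conditioning loses only a bounded multiplicative factor and leaves $\phi$ nearly uniform over the admissible values; and since $v$ is admissible, $\P_{\ca_i}[\phi = v]$ is bounded below by a quantity that, after optimising the exponent bookkeeping (at most $p-1$ coordinates fixed inside $I_0$, at most $p-1$ inside each $I_i$, contributing the $(m+1)(p-1)$ in the exponent, plus the near-uniform factor which can be made $\ge 2^{-(p-1)}$ up to the $\eta$-error), gives $2^{-(m+1)(p-1)} - o_{\eta \to 0, k, m}(1)$. I expect the delicate part to be the exact accounting of these multiplicative factors to land on the stated exponent $2^{-(m+1)(p-1)}$, and checking that the near-uniformity error from Lemma \ref{Low negentropy implies approximate equidistribution} genuinely goes to $0$ with $\eta$ uniformly in $n$, which it does because all the bounded-size-codomain and negentropy estimates used are dimension-free.
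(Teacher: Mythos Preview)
Your overall strategy—decompose $T$ according to the partition $I_0,I_1,\dots,I_m$, exploit that functions outside $I_{\ca_j}$ are nearly uniform on $\ca_j$ via Lemma~\ref{Low conditional negentropy}, and appeal to Proposition~\ref{Lower bound on non-zero probabilities} for the combinatorial lower bound—matches the paper. But two concrete points go wrong and together prevent you from reaching the stated exponent $2^{-(m+1)(p-1)}$.

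First, you propose to pick a single atom $x_0$ of $I_0$ (or of $T_0 = T\cap I_0$) with probability ``bounded below depending only on $p,k,m$'' on all $\ca_i$. There is no such bound: $|T_0|$ can be as large as $k$, so an individual atom of $T_0$ may have probability as small as $p^{-k}$. Your sentence ``at most $p-1$ coordinates fixed inside $I_0$'' has no justification. The paper does \emph{not} pick one value of the $I_0$-part; it writes $\phi=\phi_0+\sum_i\phi_i+\phi_{\mathrm{out}}$, chooses for each value $u$ of $\phi_0$ a compatible tuple $(v_1(u),\dots,v_m(u),w(u))$, and then \emph{sums over $u$}. Because $\Diam_{\ca_1,\dots,\ca_m}(I_0)\le\eta$, the sum $\sum_u\min_i\P_{\ca_i}[\phi_0=u]$ is $1-o_\eta(1)$, so the $I_0$-part contributes no loss at all to the exponent. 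That is why only $m+1$ factors of $2^{-(p-1)}$ appear.

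Second, your synchronisation step is not sharp enough. The paper treats each piece $\phi_i$ (a single mod-$p$ form) separately and uses a two-sided argument: on $\ca_i$ pigeonhole gives some $v_i$ with conditional probability $\ge p^{-1}>0$; on every other $\ca_j$, Lemma~\ref{Low conditional negentropy} makes $T_i$ nearly uniform (since $T_i\cap I_{\ca_j}=\emptyset$), so $\P_{\ca_j}[\phi_i=v_i\mid\cdots]$ is within $o_\eta(1)$ of $\P_{\{0,1\}^n}[\phi_i=v_i]$, and then Proposition~\ref{Lower bound on non-zero probabilities} is applied with $k=1$ to the single form $\phi_i$ to get $\ge 2^{-(p-1)}$. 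Doing this sequentially for $i=1,\dots,m$ and once more for $\phi_{\mathrm{out}}$ yields exactly $(m+1)$ factors of $2^{-(p-1)}$. Your proposal instead invokes Proposition~\ref{Lower bound on non-zero probabilities} on a joint event with up to $k$ constraints, which only gives $2^{-k(p-1)}$ and loses the game.
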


\begin{proof}

By the assumption on $\phi$ we can in particular write \begin{equation} \phi = \psi + \sum_{z \in Z} a_z x_z \label{expression of phi in overlap proof} \end{equation} for some $\psi \in \Phi$, some subset $Z \subset \lbrack n \rbrack$ with size at most $r$, and some coefficients $a_z  \in \F$ for each $z \in Z$. Let $T$ be the set \[\{\psi\} \cup \{x_z: z \in Z\},\] which has size at most $k$.

We partition $T$ into the sets $T_0 = T \cap I_0$, $T_1 =  T \cap I_1$,\dots, $T_m =  T \cap I_m$, and $T_{\mathrm{out}} = T \setminus (\bigcup_{0 \le i \le m} T_i)$. Writing $\phi_0, \phi_i, \phi_{\mathrm{out}}$ for the respective contributions of $T_0, T_i, T_{\mathrm{out}}$ to the expression \eqref{expression of phi in overlap proof} of $\phi$, we obtain \begin{equation} \phi = \phi_0 + \sum_{i=1}^m \phi_i + \phi_{\mathrm{out}} \label{decomposition in obtaining overlap proof} \end{equation} as the resulting decomposition.

By hypothesis, $\TV_{\ca_i, \ca_1}(T_0) \le \eta$ for each $i \in \lbrack m \rbrack$, so since $\phi_0$ is a function of $T_0$, we have \begin{equation} \TV_{\ca_i, \ca_1}(\phi_0) \le \eta \label{Close probabilities for the centers} \end{equation} for each $i \in \lbrack m \rbrack$. Letting $U$ be the set of elements $u$ of $\F_p$ such that $\P_{\ca_1} [\phi_0=u] \ge \eta^{1/20} + \eta$, by \eqref{Close probabilities for the centers} we also have $\P_{\ca_i} [\phi_0=u] \ge \eta^{1/20}$ for every $i \in \lbrack m \rbrack$ and every $u \in U$.

Let $u \in U$ be fixed. We then choose $v(u) = (v_1, \dots, v_m) \in \F_p^m$ by selecting the $v_i$ successively for each $i=1,\dots,m$. At the $i$th iteration we choose $v_i$ such that \begin{equation} \P_{\ca_i}[\phi_i = v_i | \phi_0 = u, \phi_1 = v_1, \dots, \phi_{i-1} = v_{i-1}] \ge p^{-1} \label{Lower bound from pigeonhole for forms proof} \end{equation} (such a $v_i$ always exists provided that $\P_{\ca_i}[\phi_0 = u, \phi_1 = v_1, \dots, \phi_{i-1} = v_{i-1}] \neq 0$, which we guarantee by the previous iterations). We now show that \eqref{Lower bound from pigeonhole for forms proof} then allows us to deduce

\begin{equation} \P_{\ca_j}[\phi_i = v_i | \phi_0 = u, \phi_1 = v_1, \dots, \phi_{i-1} = v_{i-1}] \ge 2^{-(p-1)} - o_{\eta \rightarrow 0, k, m}(1)  \label{Lower bound from low negentropy for forms proof} \end{equation} for each $j \in \lbrack m \rbrack \setminus \lbrace i \rbrace$ and hence for each $j \in \lbrack m \rbrack$.

Let $j \in \lbrack m \rbrack \setminus \{i\}$ be fixed. Using the disjointness of the sets $I_0, I_1, \dots, I_m$ (of mod-$p$ forms) and Lemma \ref{Low conditional negentropy} we have \[ \mathbb{J}_{\ca_j}(T_i| T_0, T_1, \dots, T_{i-1}) \le 2k \eta \] so by estimates analogous to those in the proof of Proposition \ref{Establishing proximities in distribution}, we have

\begin{equation} \P_{\ca_{j}} [T_i = y_i | T_0 = x, T_1 = y_1, \dots, T_{i-1} = y_{i-1}] = |\cod(T_i)|^{-1} + o_{\eta \rightarrow 0, k}(1) \end{equation} for all $(x,y_1,\dots,y_{i-1}) \in \cod (T_0) \times \cod (T_1) \times \dots \times \cod(T_{i-1})$ satisfying \[\P_{\ca_j}[T_0=x, T_1 = y_1, \dots, T_{i-1} = y_{i-1}] \ge \eta^{1/2},\] and for all $y_i \in \cod (T_i)$. Because $\cod (T_0) \times \cod (T_1) \times \dots \times \cod(T_{i-1})$ has size at most $p^k$ and because by the previous choices $v_1, \dots v_{i-1}$ we have \[\P_{\ca_{j}}[\phi_0 = u, \phi_1 = v_1, \dots, \phi_{i-1} = v_{i-1}] \ge \eta^{1/20} 2^{-(i-1)(p-1)} - o_{\eta \rightarrow 0, k, m}(1),\] the law of total probability shows that \[\P_{\ca_j}[T_i = y_i | \phi_0 = u, \phi_1 = v_1, \dots, \phi_{i-1} = v_{i-1}] = |\cod(T_i)|^{-1} + o_{\eta \rightarrow 0, k, m}(1)\] for each $y_i \in \cod (T_i)$. Since $\cod (T_i)$ has size at most $p^k$, the distribution of $T_i$ on $\ca_j$ conditionally on $\phi_0 = u, \phi_1 = v_1, \dots, \phi_{i-1} = v_{i-1}$ is within total variation distance $o_{\eta \rightarrow 0, k, m}(1)$ of the uniform distribution on $\cod (T_i)$.

Writing $\phi_i = \sum_{f \in T_i} a_f f$ as a linear combination of the elements of $T_i$, we can therefore approximate \[\P_{\ca_j}[\phi_i = v_i | \phi_0 = u, \phi_1 = v_1, \dots, \phi_{i-1} = v_{i-1}]\] within $o_{\eta \rightarrow 0, k, m}(1)$ by the probability $L$ that the linear combination $\sum_{f \in T_i} a_f X_f$ of independent random variables $X_f$ takes the value $v_i$, where $X_f$ is uniformly distributed on $\{0,1\}$ if $f$ is a coordinate (and hence has $\{0,1\}$ as its codomain), and $X_f$ is uniformly distributed on $\F_p$ if $f$ is a non-coordinate mod-$p$ form (and hence has $\F_p$ as its codomain).

We now show that $L \ge 2^{-(p-1)}$. If one of the functions $f \in T_i$ is the mod-$p$ form $\psi$, and furthermore $\psi$ is not the zero mod-$p$ form and $a_{\psi}$ is non-zero, then $L = 1/p$. Otherwise, all functions $f \in T_i$ such that $a_f f \neq 0$ are coordinates, so the distribution of $\sum_{f \in T_i} a_f X_f$ is the same as the distribution of $\phi_i$ on $\{0,1\}^n$; since by \eqref{Lower bound from pigeonhole for forms proof} the probability $\P_{\{0,1\}^n}[\phi_i=v_i]$ is non-zero, we have $L \neq 0$, and therefore $L \ge 2^{-(p-1)}$ by Proposition \ref{Lower bound on non-zero probabilities}. The estimate \eqref{Lower bound from low negentropy for forms proof} follows, which completes the inductive step.

Moreover, once $u$ and then $v(u)$ are fixed, we can similarly find $w(u) \in \F_p$ satisfying \begin{equation} \P_{\ca_{j}} [\phi_{\mathrm{out}} = w(u) | \phi_0 = u, \phi_1 = v(u)_1, \dots, \phi_m = v(u)_m] \ge 2^{-(p-1)} - o_{\eta \rightarrow 0, k, m}(1) \label{Lower bound for z for forms proof} \end{equation} for each $j \in \lbrack m \rbrack$. By the decomposition \eqref{decomposition in obtaining overlap proof} and the definition of the overlap we have the lower bound \[ \omega_{\ca_1, \dots, \ca_m}(T) \ge \sum_{u \in U} \min_{1 \le i \le m} \P_{\ca_i}[\phi_0 = u, \phi_1=v(u)_1, \dots, \phi_m = v(u)_m, \phi_{\mathrm{out}} = w(u)]. \] From the inequalities \eqref{Close probabilities for the centers}, \eqref{Lower bound from pigeonhole for forms proof}, \eqref{Lower bound from low negentropy for forms proof}, \eqref{Lower bound for z for forms proof} and the law of total probability, we obtain

\[\omega_{\ca_1, \dots, \ca_m}(T) \ge \sum_{u \in U} 2^{-(m+1)(p-1)} \P_{\ca_1}[\phi_0 = u] (1-o_{\eta \rightarrow 0, k, m}(1)).\] Since it follows from the definition of $U$ that \[\P_{\ca_1}[\phi_0 \in U] = 1 - o_{\eta \rightarrow 0, k}(1),\] we conclude that \[\omega_{\ca_1, \dots, \ca_m}(T) \ge 2^{-(m+1)(p-1)} - o_{\eta \rightarrow 0, k, m}(1). \qedhere \] \end{proof}

We next formulate an analogue of Proposition \ref{Set R for the proof of correlation}, which will provide us with a structure that will allow us to obtain a lower bound on the overlap of distributions by applying Proposition \ref{Obtaining an overlap bounded below}.

\begin{proposition}\label{Set Q for the proof of overlap} Let $p$ be a prime, let $m \ge 2$ be an integer, and let $\delta>0, \eta>0, \alpha>0, \epsilon>0$. Then there exist positive integers $r,B$ depending on $\delta, \alpha$ only and positive integers $A,C>0$ depending on $\delta, \eta, \a, \epsilon, m$ only, such that the following holds. For all integers $s \ge 1$ and all non-empty subsets $\ca_1, \dots, \ca_s$ of $\{0,1\}^n$, each with density at least $\delta$, there exist a subset $Q \subset \lbrack s \rbrack$ with $|Q| \ge As$, a set $\Phi$ of linear forms $\F_p^n \rightarrow \F_p$ with $|\Phi| \le B$, and a subset $I \subset \mathcal{F}_0 \cup \Phi$ with $|I| \le C$ such that the three following properties are satisfied.

\begin{enumerate}[(i)]

\item For a proportion at least $(1-\epsilon)$ of the $m$-tuples $(i_1, \dots, i_m) \in Q^m$, if $\phi: \F_p^n \rightarrow \F_p$ is a linear form such that $\TV_{\ca}(\phi, U_p) \ge \a$ for at least two distinct indices $t \in \lbrack m \rbrack$, then $\phi$ is contained in some ball $B(\phi, 3r)$ for some linear combination $\phi$ of elements of $\Phi$.

\item For a proportion at least $(1-\epsilon)$ of the $m$-tuples $(i_1, \dots, i_m) \in Q^m$, if $f$ is an element of $\mathcal{F}_0 \cup \Phi$ which belongs to $I_{\le 3r+B}(\ca_{i_t}, \mathcal{F}_0 \cup \Phi, \eta)$ for at least two distinct indices $t \in \lbrack m \rbrack$, then $f \in I$.

\item The diameter $\Diam_{\ca_{i}: i \in Q}(I)$ is at most $\eta$.

\end{enumerate}

\end{proposition}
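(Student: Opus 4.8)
The plan is to follow the blueprint of the proof of Proposition~\ref{Set R for the proof of correlation}, modifying it to obtain the additional structure (ii), which is what Proposition~\ref{Obtaining an overlap bounded below} requires. As there, I would begin by invoking Proposition~\ref{The set of mod p forms not about uniform on a dense subset of the cube is contained in boundedly many balls} to associate with each $\ca_i$ a family $\Phi_i$ of at most $B$ mod-$p$ forms such that every $\phi$ with $\TV_{\ca_i}(\phi, U_p) \ge \a$ lies in $\bigcup_{\psi \in \Phi_i} B(\psi, r)$, and then iterate the metric entropy version of Lemma~\ref{Common element lemma} on the sets $\Phi_i$. The crucial difference is that in the present statement the size of $Q$ must be a constant fraction of $s$ rather than merely $s/\mathrm{poly}(n)$, so I cannot afford the pigeonhole step over the (polynomially many) possible supports $I_{\le 3r+B}^{(i)}$ that was used to obtain exact agreement of the irreducible supports in Proposition~\ref{Set R for the proof of correlation}. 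Instead I would settle for the weaker ``two-out-of-$m$'' conclusions (i) and (ii): for most $m$-tuples, any form that is biased on at least two of the sets lies in a common ball, and any coordinate or form in $\Phi$ that is irreducible on at least two of the sets lies in a common small set $I$.

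The key steps, in order, would be: first, run the iteration of the set version of Lemma~\ref{Common element lemma} (as described in the paragraph following that lemma in the excerpt) on the sets $I_{\le 3r+B}^{(i)} = I_{\le 3r+B}(\ca_i, \mathcal{F}_0 \cup \Phi, \eta)$ — each of which has size at most some constant $C$ by Proposition~\ref{Bounded number of irreducible sets} applied with $f = (\phi_1, \dots, \phi_{a_{\max}})$ — to find a dense $\Lambda \subset [s]$ and a set $I$ of at most $C$ elements such that $I_{\le 3r+B}^{(i)} \cap I_{\le 3r+B}^{(j)} \subset I$ for a proportion at least $1 - \e'$ of pairs $(i,j) \in \Lambda^2$; this yields (ii) by a union bound over the $\binom{m}{2}$ pairs of coordinates in an $m$-tuple, after choosing $\e'$ appropriately in terms of $\e$ and $m$. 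Second, run the metric entropy version of Lemma~\ref{Common element lemma} as in Proposition~\ref{Set R for the proof of correlation} to obtain $\Phi$ and the ball-containment conclusion (i), again passing to a constant-fraction subset and using a union bound. Third, obtain (iii) by the pigeonhole argument on the distribution of $I$ as one function (its codomain has size at most $p^C$), which costs only a constant factor $(\eta/p^C)^{p^C}$ and hence keeps $|Q|$ a constant fraction of $s$. Throughout, the order of quantifiers is: fix $\e$ and $m$, then choose the auxiliary parameter $\e'$ depending on $\e, m, \eta, C, p$; the radii $r$ and the bound $B$ depend only on $\delta, \a$ (via Proposition~\ref{The set of mod p forms not about uniform on a dense subset of the cube is contained in boundedly many balls}), and $C$ depends only on $\delta, \eta, \a, p$ (via Proposition~\ref{Bounded number of irreducible sets}).

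The main obstacle I expect is keeping the lower bound on $|Q|$ a genuine constant fraction of $s$: every application of Lemma~\ref{Common element lemma} loses a factor $\e'/C$ and is iterated a bounded number of times (at most $B$ times for the ball iteration, at most $C$ times for the set iteration, by the same ``the sets strictly shrink'' argument as in Proposition~\ref{Set R for the proof of correlation}), and each pigeonhole step on a distribution loses a factor depending only on $\eta, p, C, m$ — so as long as none of these intermediate steps involves a pigeonhole over a set whose size grows with $n$ (which is why I replace the ``exact agreement of supports'' step by the weaker two-out-of-$m$ conclusion (ii)), the final fraction is a positive constant $A = A(\delta, \eta, \a, \e, p, m)$. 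A secondary subtlety is that the set $\Phi$ over which the irreducible supports $I_{\le 3r+B}^{(i)}$ are computed must be fixed \emph{before} running the set-version iteration that produces $I$, so I would first complete the metric-entropy iteration to define $\Phi$, then define $I_{\le 3r+B}^{(i)}$ relative to $\mathcal{F}_0 \cup \Phi$ on the surviving index set, and only then run the second iteration; I would need to check that restricting to the surviving index set does not disturb the proportions, which is routine since all the relevant proportions are among pairs within the current index set.
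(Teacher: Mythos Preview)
Your proposal is correct and matches the paper's proof essentially step for step: first run the metric-entropy iteration of Lemma~\ref{Common element lemma} to fix $\Phi$ and obtain $L_{a_{\max}}$, then define the irreducible supports $I_{\le 3r+B}^{(i)}$ relative to $\mathcal{F}_0 \cup \Phi$, then replace the costly pigeonhole on these supports by an iteration of the \emph{set} version of Lemma~\ref{Common element lemma} (which is precisely the modification the paper makes), and finally pigeonhole on the distribution of $I$ to get (iii). Your ordering correction in the last paragraph, your use of the union bound over $\binom{m}{2}$ pairs to pass from pairs to $m$-tuples, and your bookkeeping of the constant-fraction losses all coincide with the paper's argument.
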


\begin{proof}

As in the proof of Proposition \ref{Set R for the proof of correlation}, we begin by introducing a quantity $\epsilon'$ which we shall fix later and which depends on $\epsilon$. (However, it will be independent of $n$ this time.) We go through the proof of Proposition \ref{Set R for the proof of correlation} up to and including the point where $I_{\le 3r+B}^{(i)}$ is defined for each $i \in L_{a_{\max}}$. Then, rather than applying the pigeonhole principle to the sets $I_{\le 3r+B}^{(i)}$, we iteratively apply the set version of Lemma \ref{Common element lemma} to the sets $I_{\le 3r+B}^{(i)}$, just as we did to the sets $\ca_i$ with the metric entropy version at the start of the proof, this time with parameter some $\epsilon''$ which we shall also fix later. We obtain a subset $I$ of $\mathcal{F}_0 \cup \Phi$ and a subset $Q_{\mathrm{pre}}$ of $L_{a_{\max}}$ with size at least $(\epsilon''/C)^C |L_{a_{\max}}|$, such that for a proportion at least $1 - \epsilon''$ of the pairs $(i,j) \in Q_{\mathrm{pre}}^2$ we have \[I_{\le 3r+B}^{(i)} \cap I_{\le 3r+B}^{(j)} = I.\]

We then resume the argument as in the proof of Proposition \ref{Set R for the proof of correlation}, obtaining the set $Q$ from $Q_{\mathrm{pre}}$ in the same manner that $R$ was obtained from $R_{\mathrm{pre}}$. Taking successively \begin{align*} \epsilon'' & = \binom{m}{2}^{-1} (\eta/p^C)^{p^C})^2 \epsilon \\ \epsilon' &= \binom{m}{2}^{-1} ((\epsilon''/C)^C)^2 (\eta/p^C)^{p^C})^2 \epsilon \end{align*} then guarantees that (ii) and (i) are satisfied respectively. \end{proof}

We are now ready to deduce Theorem \ref{Overlap bounded below}.

\begin{proof}[Proof of Theorem \ref{Overlap bounded below}]

Let $\eta>0$ be fixed. We shall choose $\a$ in a manner that need not depend on $\nu$ (although our argument would still work if it did), then choose $\eta$ depending on $\nu$, and then apply Proposition \ref{Set Q for the proof of overlap}. We let $\Phi$ and $r$ be as in Proposition \ref{Set Q for the proof of overlap}. We fix $(i_1, \dots, i_m)$ such that (i) and (ii) from Proposition \ref{Set Q for the proof of overlap} are satisfied, and consider a mod-$p$ form $\phi$.

Let us first assume that $\phi$ is outside the union of balls $\bigcup_{\psi \in \Phi} B(\psi, 3r)$. Then by (i) from Proposition \ref{Set Q for the proof of overlap} we have $\TV_{\ca_{i_t}}(\phi, U_p) \le \a$ for all but at most one index $t \in \lbrack m \rbrack$ and hence \[\omega_{\ca_{i_1}, \dots, \ca_{i_m}} \phi \ge p^{-1} - o_{\a  \rightarrow 0}(1).\] We can choose $\a$ sufficiently small (in a manner that does not depend on $\nu$) such that this entails \[\omega_{\ca_{i_1}, \dots, \ca_{i_m}} \phi \ge 2^{-(m+1)(p-1)} - \nu.\]

If instead $\phi \in \bigcup_{\psi \in \Phi} B(\psi, 3r)$, then (ii) and (iii) from Proposition \ref{Set Q for the proof of overlap} provide us with the assumptions of Proposition \ref{Obtaining an overlap bounded below}, from which we obtain \[\omega_{\ca_{i_1}, \dots, \ca_{i_m}} \phi \ge 2^{-(m+1)(p-1)} - o_{\eta \rightarrow 0, \d, \a, m}(1).\] Choosing $\eta$ sufficiently small (depending on $\d,\a,m$) we conclude \[\omega_{\ca_{i_1}, \dots, \ca_{i_m}} \phi \ge 2^{-(m+1)(p-1)} - \nu\] as desired. \end{proof}

\end{document}